\newcommand{\xxx}[2][k]{#2_1\times\cdots\times #2_{#1}}
\newcommand{\boldd}[1]{\emph{#1}\medskip\\} 
\let\Gamma=\varGamma
\let\Delta=\varDelta
\newcommand{\6}[1]{_{\le #1}}
\newcommand{\7}[1]{_{\le S_{#1}}}   
\newcommand{\8}[1]{^{(#1)}}     
\renewenvironment{enumerate}[1][]
{\begin{enumerat}[#1]\setlength{\itemsep}{6pt}}{\end{enumerat}}
\newenvironment{enuma}{\begin{enumerate}[{\rm(a) }]}{\end{enumerate}}
\newenvironment{enumi}{\begin{enumerate}[{\rm(i) }]}{\end{enumerate}}
\renewenvironment{itemize}
{\begin{itemiz}\setlength{\itemsep}{6pt} \setlength{\itemindent}{-20pt} }
{\end{itemiz}}
\definecolor{darkgreen}{rgb}{0,0.5,0}
\definecolor{bluegreen}{rgb}{0,0.2,0.8}
\definecolor{darkred}{rgb}{0.8,0,0}
\definecolor{newercolor}{rgb}{0.2,0,1}
\definecolor{darkyellow}{rgb}{0.7,0.7,0}
\definecolor{orange}{rgb}{1.0,0.5,0}
\definecolor{darkorange}{rgb}{0.8,0.4,0}
\numberwithin{table}{section}
\newlength{\short}
\newcommand{\4}[1]{\widebar{#1}}
\newcommand{\5}[1]{\widehat{#1}}
\newcommand{\9}[1]{{}^{#1}\!}
\def\pair[#1,#2]{[\hskip-1.5pt[#1,#2]\hskip-1.5pt]}
\let\oldcirc=\circ
\renewcommand{\circ}{\mathchoice
    {\mathbin{\scriptstyle\oldcirc}}{\mathbin{\scriptstyle\oldcirc}}
    {\mathbin{\scriptscriptstyle\oldcirc}}
    {\mathbin{\scriptscriptstyle\oldcirc}}}
\def\beq#1\eeq{\begin{equation*}#1\end{equation*}}
\def\beqq#1\eeqq{\begin{equation}#1\end{equation}}
\numberwithin{equation}{section}
\newtheorem{Thm}{Theorem}[section]
\newtheorem{Prop}[Thm]{Proposition}
\newtheorem{Lem}[Thm]{Lemma}
\newtheorem{Defi}[Thm]{Definition}
\newtheorem{Hyp}[Thm]{Hypotheses}
\newtheorem{Not}[Thm]{Notation}
\newtheorem{Ex}[Thm]{Example}
\theoremstyle{definition}
\newtheorem{Rmk}[Thm]{Remark}
\newtheorem{Th}{Theorem}
\newcommand{\widebar}[1]
      {\overset{{\mskip3mu\leaders\hrule height0.4pt\hfill\mskip3mu}}{#1}
      \vphantom{#1}}
\newcounter{let} \setcounter{let}{0}
\loop\stepcounter{let}
\edef\csname cal\alph{let}\endcsname%
\loop\stepcounter{let}
\edef\csname scr\alph{let}\endcsname%
\newcommand{\tdef}[2][]{\expandafter\newcommand\csname#2\endcsname%
{#1\textup{#2}}}
\newcommand{\fdef}[1]{\expandafter\newcommand\csname#1\endcsname%
{\mathfrak{#1}}}
\newcommand{\bbdef}[1]{\expandafter\newcommand%
\csname#1\endcsname{\mathbb{#1}}}
\newcommand{\itdef}[1]{\expandafter\newcommand\csname#1\endcsname%
{\textit{#1}}}
\newcommand{\sminus}{\smallsetminus}
\newcommand{\lie}[3]{\def\test{#2}\def\tst{G}\ifx\test\tst{{}^{#1}#2_{#3}}
\else{{}^{#1}\!#2_{#3}}\fi}
\renewcommand{\*}{\,\lower6pt\hbox{\Large{\textup{*}}}\,}
\newcommand{\syl}[2]{\textup{Syl}_{#1}(#2)}
\newcommand{\sylp}[1]{\syl{p}{#1}}
\newcommand{\autf}{\Aut_{\calf}}
\newcommand{\outf}{\Out_{\calf}}
\newcommand{\homf}{\Hom_{\calf}}
\newcommand{\isof}{\Iso_{\calf}}
\newcommand{\defeq}{\overset{\textup{def}}{=}}
\newcommand{\mxfoura}[8]{\left(\begin{smallmatrix}#1&#2&#3&#4\\#5&#6&#7&#8}
\newcommand{\mxfourb}[8]{\\#1&#2&#3&#4\\#5&#6&#7&#8\end{smallmatrix}\right)}
\let\emptyset=\varnothing
\renewcommand{\:}{\colon}
\newcommand{\nsg}{\trianglelefteq}
\newcommand{\snsg}{\nsg\,\nsg}
\newcommand{\til}[1]{\widetilde{#1}}
\let\too=\longrightarrow
\let\xto=\xrightarrow
\newcommand{\gen}[1]{{\langle}#1{\rangle}}
\newcommand{\Gen}[1]{{\bigl\langle}#1{\bigr\rangle}}
\newcommand{\longleft}[1]{\;{\leftarrow%
\count255=0 \loop \mathrel{\mkern-6mu}%
    \relbar\advance\count255 by1\ifnum\count255<#1\repeat}\;}
\newcommand{\longright}[1]{\;{\count255=0 \loop \relbar\mathrel{\mkern-6mu}%
    \advance\count255 by1\ifnum\count255<#1\repeat\rightarrow}\;}
\newcommand{\Right}[2]{\overset{#2}{\longright#1}}
\newcommand{\RIGHT}[3]{\mathrel{\mathop{\kern0pt\longright#1}
        \limits^{#2}_{#3}}}
\newcommand{\LEFT}[3]{\mathrel{\mathop{\kern0pt\longleft#1}\limits^{#2}_{#3}}
}
\newcommand{\dRIGHT}[3]{\mathrel{%
   \mathop{\vcenter{\baselineskip=0pt\hbox{$\kern0pt\longright#1$}%
   \hbox{$\kern0pt\longright#1$}}}\limits^{#2}_{#3}}}
\newcommand{\LRIGHT}[3]{\mathrel{%
   \mathop{\vcenter{\baselineskip=0pt\hbox{$\kern0pt\longleft#1$}%
   \hbox{$\kern0pt\longright#1$}}}\limits^{#2}_{#3}}}
\newcommand{\RLEFT}[3]{\mathrel{%
   \mathop{\vcenter{\baselineskip=0pt\hbox{$\kern0pt\longright#1$}%
   \hbox{$\kern0pt\longleft#1$}}}\limits^{#2}_{#3}}}
\newcommand{\onto}[1]{\;{\count255=0 \loop \relbar\mathrel{\mkern-6mu}%
    \advance\count255 by1
    \ifnum\count255<#1 \repeat \twoheadrightarrow}\;}
\title{Normalizers of sets of components in fusion systems}
\author{Bob Oliver}
\address{Universit\'e Sorbonne Paris Nord, LAGA, UMR 7539 du CNRS, 
99, Av. J.-B. Cl\'ement, 93430 Villetaneuse, France.}
\email{bobol@math.univ-paris13.fr}
\thanks{B. Oliver is partially supported by UMR 7539 of the CNRS}
\subjclass[2000]{Primary 20D20. Secondary 20D25, 20D40}
\keywords{fusion systems, Sylow subgroups, components}
\begin{document}

\begin{abstract} 
We describe some new ways to construct saturated fusion subsystems, 
including, as a special case, the normalizer of a set of components of the 
ambient fusion system. This was motivated in part by Aschbacher's 
construction of the normalizer of one component, and in part by joint work 
with three other authors where we had to construct the normalizer of all of 
the components. 
\end{abstract}

\maketitle


A saturated fusion system over a finite $p$-group $S$ is a category whose 
objects are the subgroups of $S$, whose morphisms are injective 
homomorphisms between the subgroups, and which satisfy certain conditions 
first formulated by Puig (see \cite{Puig} and Definition \ref{d:fusion}). The 
motivating examples are the fusion systems $\calf_S(G)$ when $G$ is a 
finite group and $S\in\sylp{G}$: the objects of $\calf_S(G)$ are the 
subgroups of $S$, and the morphisms are those homomorphisms between 
subgroups induced by conjugation in $G$. 

By analogy with finite groups, Aschbacher, in \cite[Chapter 9]{A-gfit}, 
defined a component of a saturated fusion system $\calf$ over $S$ to be a 
subnormal fusion subsystem of $\calf$ that is quasisimple (see Definition 
\ref{d:component}). He then showed that the components of $\calf$ commute 
and satisfy other properties satisfied by the components of a finite group. 
In a later paper \cite[\S\,2.1]{A-comp.type}, he constructed the normalizer 
of a component; i.e., the unique largest fusion subsystem that contains the 
given component as a normal subsystem. 

In a recent paper with Carles Broto, Jesper M\o{}ller, and Albert Ruiz 
\cite{BMOR}, we needed to construct a normalizer for all of the components; 
i.e., a largest subsystem of $\calf$ that contains each of the components 
of $\calf$ as a normal subsystem. This turned out to be somewhat simpler 
than Aschbacher's construction of the normalizer of one component, but it 
also led this author to try to better understand Aschbacher's construction, 
and to look for ways in which it could be generalized. This has resulted in a 
slightly more explicit construction of these normalizers, and 
led to our two main Theorems \ref{t:H<G} and \ref{t:NJ}. In particular, 
Theorem \ref{t:H<G} provides a very general method for constructing 
saturated fusion subsystems (normal or not) of a given fusion system: one 
which we hope will have other applications in the future.

As one special case of Theorem \ref{t:NJ}, we get the following theorem 
about normalizers of components:

\begin{Th} \label{ThA}
Let $\calf$ be a saturated fusion system over a finite $p$-group $S$, and 
let $\calc_1,\dots,\calc_k\le\calf$ be the components of $\calf$ over 
$T_1,\dots,T_k\le S$. Let $J\subseteq\{1,\dots,k\}$ be a nonempty subset 
such that the subgroup $T_J=\gen{T_j\,|\,j\in J}$ is fully normalized in 
$\calf$ (Definition \ref{d:fusion}(c)), and let $\calc_J\le\calf$ be the 
central product of the components $\calc_j$ for $j\in J$. Set 
$N_J=N_S(T_J)$ and $W_J=\bigcap_{j\in J}N_S(T_j)$. Then there are saturated 
fusion subsystems $\calw_J\nsg\caln_J\le\calf$ over $W_J\nsg N_J\le S$ such 
that $\caln_J$ is the largest saturated fusion subsystem of $\calf$ 
containing $\calc_J$ as a normal subsystem, and $\calw_J$ is the largest 
saturated subsystem containing each $\calc_j$ (for $j\in J$) as a normal 
subsystem. 
\end{Th}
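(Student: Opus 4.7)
The plan is to deduce Theorem~A as a specialization of Theorem~\ref{t:NJ}, by feeding that theorem with the family of components $\{\calc_j \mid j\in J\}$. First I would set up the central product. Because components are subnormal quasisimple subsystems, Aschbacher's results in \cite[Chapter~9]{A-gfit} ensure that distinct components commute pairwise in $\calf$; consequently $T_J=\gen{T_j\mid j\in J}$ is the internal central product of the $T_j$ and $\calc_J$ is a well-defined saturated fusion subsystem of $\calf$ over $T_J$. The hypothesis that $T_J$ is fully normalized in $\calf$ is then exactly what is needed to make $N_J=N_S(T_J)$ a Sylow $p$-subgroup of any normalizer-type subsystem built on $T_J$.

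For $\caln_J$ I would apply Theorem~\ref{t:NJ} to $\calc_J$, viewed as a single normal-target subsystem over the fully normalized subgroup $T_J$. Any saturated subsystem $\cale\le\calf$ in which $\calc_J$ is normal must satisfy the Sylow-inclusion condition, which forces the underlying $p$-group of $\cale$ to normalize $T_J$ and so to lie in $N_J$. Theorem~\ref{t:NJ} then produces a saturated subsystem $\caln_J$ over exactly $N_J$, in which $\calc_J\nsg\caln_J$, and the universal property built into that construction gives the required maximality.

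For $\calw_J$ the extra demand is that each $\calc_j$ be normal \emph{individually}, not merely their central product. This forces the underlying $p$-group to normalize each $T_j$ separately, hence to lie in $W_J=\bigcap_j N_S(T_j)$. I would apply Theorem~\ref{t:NJ} again, but now with the full family $\{\calc_j\}_{j\in J}$ as input (observing that $W_J\le N_J$ since $T_J$ is generated by the $T_j$); this yields a saturated subsystem $\calw_J$ over $W_J$ in which each $\calc_j$ is normal, maximal with this property. The containment $\calw_J\nsg\caln_J$ is then immediate, since any subsystem containing each $\calc_j$ normally contains $\calc_J$ normally as well.

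The main obstacle I anticipate is not the existence of the subsystems, which is handed to us by Theorem~\ref{t:NJ}, but rather the verification that its hypotheses are met by the data $(\calf,\{\calc_j\}_{j\in J},T_J)$ — in particular, the coordination of $\autf(T_j)$-actions across $j\in J$ needed to ensure saturation of $\calw_J$ over $W_J$. Aschbacher's construction of the normalizer of a single component handles one $T_j$ at a time; the work absorbed into Theorems~\ref{t:H<G} and~\ref{t:NJ} is precisely the simultaneous handling of several components, and once those theorems are in place Theorem~A follows by matching the input.
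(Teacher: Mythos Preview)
There is a genuine gap in how you feed Theorem~\ref{t:NJ}. That theorem is stated under Hypotheses~\ref{h:Ei<F}, and condition~(i) of those hypotheses requires that the central product $\cale=\cale_1\cdots\cale_k$ of \emph{all} the input subsystems be normal in $\calf$. You propose to use as input only the components indexed by $J$ (or, for $\caln_J$, the single subsystem $\calc_J$). In either case the central product is $\calc_J$, and the hypothesis $\calc_J\nsg\calf$ is exactly what is \emph{not} known --- indeed, constructing the normalizer of $\calc_J$ is the content of the theorem. Condition~(iii) would also fail in general: an element of $\autf(T_J)$ need not permute the $T_j$ for $j\in J$, since conjugation in $\calf$ may carry a component $\calc_j$ with $j\in J$ to one with index outside $J$.

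The paper's route avoids this by inputting the \emph{full} set of components $\calc_1,\dots,\calc_k$ into Hypotheses~\ref{h:Ei<F}; this is Proposition~\ref{p:Comp(F)}, where $\cale=E(\calf)\nsg\calf$ holds by Aschbacher's \cite[9.8--9.9]{A-gfit}, and conditions~(ii),~(iii) follow from quasisimplicity via Proposition~\ref{p:(iv')=>(iv)}. The subset $J\subseteq\kk$ then enters only as a parameter in the \emph{conclusion} of Theorem~\ref{t:NJ}, which produces $\caln_J$ and $\calw_J$ (and the relation $\calw_J\nsg\caln_J$) simultaneously in a single application. Your final sentence about $\calw_J\nsg\caln_J$ is also not justified: the fact that any subsystem normalizing each $\calc_j$ also normalizes $\calc_J$ gives only $\calw_J\le\caln_J$ by maximality, not normality; the latter comes from Theorem~\ref{t:H<G}(b) applied inside $\caln_J$.
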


In other words, $\calc_J\nsg\caln_J$, and for each saturated fusion 
subsystem $\cald\le\calf$ such that $\calc_J\nsg\cald$ we have 
$\cald\le\caln_J$. Similarly, $\calc_j\nsg\calw_J$ for each $j\in J$, and 
for each $\cald\le\calf$ such that $\calc_j\nsg\cald$ for all $j\in J$, we 
have $\cald\le\calw_J$. 
More generally, Theorem \ref{t:NJ} says that a similar conclusion holds if 
the components are replaced by an arbitrary set of fusion subsystems 
satisfying certain conditions listed in Hypotheses \ref{h:Ei<F}. 

We mostly regard Theorems \ref{t:H<G} and \ref{t:NJ} as new tools for 
constructing saturated fusion subsystems of a given fusion system, with 
Theorem \ref{ThA} as one special case of particular interest. But they also 
have consequences for fundamental groups of linking systems associated to 
the fusion systems, and the subsystems constructed here can be associated 
to covering spaces of the geometric realizations of those linking systems. 
See Remark \ref{rmk:linking} at the end of Section 2 for a more detailed 
discussion.

In Section 1, we recall some basic definitions and properties of saturated 
fusion systems, all well known except (perhaps) for the technical Lemma 
\ref{l:f.n.=>recept}. In Section 2, we describe a very general way to 
construct saturated subsystems of a given saturated fusion system $\calf$ 
(see Hypotheses \ref{h:chi-setup} and Theorem \ref{t:H<G}). Then, in 
Section 3, we consider sets of commuting subsystems of a saturated fusion 
system $\calf$, under assumptions (Hypotheses \ref{h:Ei<F}) that include 
the case where the subsystems are the components of $\calf$ (Proposition 
\ref{p:Comp(F)}). This leads to Theorem \ref{t:NJ}, a special case of 
Theorem \ref{t:H<G}, where we construct normalizers of these subsystems. 
Theorem \ref{ThA} then follows as a special case of Theorem \ref{t:NJ}. 

We would like to thank both referees of this paper for their many 
suggestions, leading to a number of improvements in the presentation.

\bigskip

\newcommand{\kk}{\underline{\boldsymbol{k}}} 

\textbf{Notation: } We write $c_x$ for a homomorphism defined via 
conjugation by $x$ ($c_x(g)=xgx^{-1}$), and $c_x^P\in\Aut(P)$ for its 
restriction to a subgroup $P$ when $x$ normalizes $P$. As usual, when 
$H\le G$ is a pair of groups and $x\in G$, we write $\9xH=c_x(H)=xHx^{-1}$ 
and $H^x=c_x^{-1}(H)=x^{-1}Hx$. When $G$ is a group and $P,Q\le G$, we 
let $\Hom_G(P,Q)\subseteq\Hom(P,Q)$ be the set of all homomorphisms of the 
form $c_x$ for $x\in G$ such that $\9xP\le Q$. 

Functions and morphisms are always composed from right to left. Also, 
\begin{itemize} 
\item $\scrs(G)$ is the set of subgroups of a group $G$;
\item $\kk=\{1,\dots,k\}$ for $k\ge1$; and 
\item $\calh\6T=\calh\cap\scrs(T)$ when $\calh$ is a set of subgroups of 
a group $S$ and $T\le S$. 
\end{itemize}
When $\calf$ is a fusion system over $S$ and $\calh\subseteq\scrs(S)$, we 
let $\calf^\calh\subseteq\calf$ denote the full subcategory whose set of 
objects is $\calh$.


\section{Background on fusion and linking systems}

We summarize here our basic terminology  when working with fusion systems. 
For a prime $p$, a \emph{fusion system} over a finite $p$-group $S$ is a 
category whose objects are the subgroups of $S$, and whose morphisms are 
injective homomorphisms between subgroups such that for each $P,Q\le S$:
\begin{itemize} 
	\item $\homf(P,Q)\supseteq \Hom_S(P,Q)$; and 
	\item for each $\varphi\in\homf(P,Q)$, 
	$\varphi^{-1}\in\homf(\varphi(P),P)$.
\end{itemize}
Here, $\homf(P,Q)$ denotes the set of morphisms in $\calf$ from 
$P$ to $Q$. We also write $\isof(P,Q)$ for 
the set of isomorphisms, $\autf(P)=\isof(P,P)$, and 
$\outf(P)=\autf(P)/\Inn(P)$. For $P\le S$ and $g\in S$, we set 
\[ P^\calf = \{\varphi(P)\,|\,\varphi\in\homf(P,S)\} 
\qquad\textup{and}\qquad 
g^\calf = \{\varphi(g)\,|\,\varphi\in\homf(\gen{g},S)\}  \]
(the sets of subgroups and elements \emph{$\calf$-conjugate} to $P$ and 
to $g$).

\begin{Defi} \label{d:fusion}
Let $\calf$ be a fusion system over a finite $p$-group $S$.
\begin{enuma}

\item A subgroup $P\le S$ is \emph{fully automized} in $\calf$ if 
$\Aut_S(P)\in\sylp{\autf(P)}$.

\item A subgroup $P\le S$ is \emph{receptive} in $\calf$ if each 
isomorphism $\varphi\in\isof(Q,P)$ in $\calf$ extends to a morphism 
$\4\varphi\in\homf(N_\varphi^\calf,S)$, where 
	\[ N_\varphi^\calf = \{ x\in N_S(Q) \,|\, \varphi c_x^Q\varphi^{-1} \in 
	\Aut_S(P) \}. \]

\item A subgroup $P\le S$ is \emph{fully normalized} \emph{(fully 
centralized)} in $\calf$ if $|N_S(P)|\ge|N_S(Q)|$ ($|C_S(P)|\ge|C_S(Q)|$) 
for each $Q\in P^\calf$. 

\item The fusion system $\calf$ is \emph{saturated} if each 
$\calf$-conjugacy class of subgroups of $S$ contains a member that is fully 
automized and receptive.
\end{enuma}
\end{Defi}

We will sometimes need to refer to the following criteria for a fusion 
system to be saturated.

\begin{Prop}[{\cite[Theorem 5.2]{RS} or \cite[Proposition I.2.5]{AKO}}] 
\label{p:BLO<=>RS}
A fusion system is saturated if and only if it satisfies the following two 
conditions:
\begin{itemize}

\item \emph{(Sylow axiom)} each subgroup $P\le S$ that is fully normalized 
in $\calf$ is also fully automized and fully centralized; and 

\item \emph{(extension axiom)} each $P\le S$ that is fully 
centralized in $\calf$ is also receptive. 

\end{itemize}
\end{Prop}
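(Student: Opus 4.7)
The reverse direction is essentially tautological: in any $\calf$-conjugacy class, a fully normalized representative $P$ is automatically fully automized and fully centralized by the Sylow axiom, and then receptive by the extension axiom, so $P$ serves as the witness to saturation demanded by Definition \ref{d:fusion}(d). For the forward direction, I would first prove the stronger statement that any fully normalized subgroup is automatically fully automized, fully centralized, \emph{and} receptive (which immediately delivers the Sylow axiom), and then leverage this to verify the extension axiom.

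The plan for the stronger statement rests on a single transport technique. Given $P$ fully normalized, choose $Q \in P^\calf$ fully automized and receptive (by saturation) and pick any $\varphi \in \isof(Q,P)$. Sylowness of $\Aut_S(Q)$ in $\autf(Q)$ lets me replace $\varphi$ by $\varphi\alpha^{-1}$ for some $\alpha \in \autf(Q)$ so that $\varphi^{-1}\Aut_S(P)\varphi \le \Aut_S(Q)$; this forces $N_{\varphi^{-1}}^\calf = N_S(P)$, and receptiveness of $Q$ extends $\varphi^{-1}$ to $\tilde\varphi^{-1}\:N_S(P) \to S$. Since $\tilde\varphi^{-1}(P)=Q$, the image lies in $N_S(Q)$ and $C_S(P)$ is carried into $C_S(Q)$. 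Injectivity plus $|N_S(P)| \ge |N_S(Q)|$ forces $|N_S(P)| = |N_S(Q)|$, so $\tilde\varphi^{-1}$ is an isomorphism $N_S(P) \to N_S(Q)$, and then the identity $|N_S| = |C_S|\,|\Aut_S|$ combined with the inequalities $|C_S(P)|\le|C_S(Q)|$ and $|\Aut_S(P)|\le|\Aut_S(Q)|$ forces both to be equalities. Full automization of $P$ follows by transport via $\varphi$, and full centralization by running the same construction with an arbitrary $R \in P^\calf$ in place of $P$ to deduce $|C_S(R)|\le|C_S(Q)|=|C_S(P)|$. Receptivity of $P$ drops out as well: for $\psi \in \isof(R,P)$, the Sylow adjustment gives $N_\psi^\calf \subseteq N_{\varphi^{-1}\psi}^\calf$, receptiveness of $Q$ extends $\varphi^{-1}\psi$ to $\tau\:N_\psi^\calf \to N_S(Q)$, and composing with the inverse isomorphism $\tilde\varphi\:N_S(Q) \to N_S(P)$ gives the required extension of $\psi$.

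For the extension axiom, I would take $P$ fully centralized, pick $P' \in P^\calf$ fully normalized (so the previous paragraph makes $P'$ fully automized, fully centralized, and receptive, with $|C_S(P)|=|C_S(P')|$), and repeat the transport with $P'$ in place of $Q$: choose $\chi \in \isof(P,P')$, Sylow-adjust so $\chi\Aut_S(P)\chi^{-1} \le \Aut_S(P')$, extend $\chi$ via receptiveness of $P'$ to $\tilde\chi\:N_S(P) \to N_S(P')$, and observe that the equality of centralizer orders upgrades $\tilde\chi(C_S(P)) \le C_S(P')$ to $\tilde\chi(C_S(P)) = C_S(P')$. The chief technical obstacle — and the place where full centralization rather than full normalization is really used — is that $\tilde\chi$ need not be surjective onto $N_S(P')$, so given $\psi \in \isof(R,P)$ and an extension $\tau\:N_\psi^\calf \to N_S(P')$ of $\chi\psi$ (again coming from $N_\psi^\calf \subseteq N_{\chi\psi}^\calf$ and receptiveness of $P'$), one must check that $\tau$ lands in $\tilde\chi(N_S(P))$. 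For this, given $x \in N_\psi^\calf$, I would choose $y \in N_S(P)$ with $c_y^P = \psi c_x^R \psi^{-1}$ and note that $c_{\tau(x)}^{P'} = \chi c_y^P \chi^{-1} = c_{\tilde\chi(y)}^{P'}$, so $\tau(x)\tilde\chi(y)^{-1} \in C_S(P') = \tilde\chi(C_S(P)) \subseteq \tilde\chi(N_S(P))$. Once this is in hand, $\tilde\chi^{-1}\circ\tau\:N_\psi^\calf \to N_S(P)$ extends $\psi$, proving $P$ receptive.
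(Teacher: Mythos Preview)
Your argument is correct and is essentially the standard proof of this equivalence as in \cite[Proposition~I.2.5]{AKO} and \cite[Theorem~5.2]{RS}. Note that the present paper does not give its own proof of this proposition: it is quoted with attribution to those references, so there is no in-paper argument to compare against. Your transport technique (choose a fully automized and receptive representative $Q$, Sylow-adjust the connecting isomorphism so that $\Aut_S$-groups line up, extend via receptiveness, and then read off the order equalities) is exactly the mechanism used in \cite[Lemma~I.2.6]{AKO}, which is the key step in the cited proof; the handling of the extension axiom via the identity $\tilde\chi(C_S(P))=C_S(P')$ is likewise the standard device.
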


We often need to refer to the following types or classes of subgroups 
in a fusion system. 

\begin{Defi} \label{d:subgroups}
Let $\calf$ be a fusion system over a finite $p$-group $S$. For a subgroup 
$P\le S$, 
\begin{enuma} 

\item $P$ is \emph{$\calf$-centric} if $C_S(Q)\le Q$ for each $Q\in 
P^\calf$;

\item $P$ is \emph{$\calf$-radical} if $O_p(\outf(P))=1$; 


\item $P$ is \emph{strongly closed} in $\calf$ if for each $x\in P$, 
$x^\calf\subseteq P$; and 

\item $P$ is \emph{central} in $\calf$ if each morphism 
$\varphi\in\homf(Q,R)$ in $\calf$ extends to some 
$\4\varphi\in\homf(PQ,PR)$ such that $\4\varphi|_P=\Id_P$.

\end{enuma}
Let $\calf^{cr}\subseteq\calf^c$ denote the sets of $\calf$-centric 
$\calf$-radical subgroups, and $\calf$-centric subgroups, respectively. 
Let 
\begin{enuma} \setcounter{enumi}{4}

\item $Z(\calf)$ (the \emph{center} of $\calf$) be the (unique) largest 
subgroup central in $\calf$; and set 

\item $\foc(\calf)=\gen{x^{-1}y\,|\,x,y\in S,~ y\in x^\calf}$ (the 
\emph{focal subgroup} of $\calf$). 

\end{enuma}
\end{Defi}

In many cases, to prove saturation, it is not 
necessary to prove the axioms for all conjugacy classes of subgroups. 
If $\calf$ is a fusion system over a finite $p$-group $S$ and $\calh$ is a 
set of subgroups of $S$ closed under $\calf$-conjugacy, then 
\begin{itemize} 
\item $\calf$ is \emph{$\calh$-saturated} if each member of $\calh$ is 
$\calf$-conjugate to a subgroup that is fully automized and receptive; and 

\item $\calf$ is \emph{$\calh$-generated} if each morphism in $\calf$ is a 
composite of restrictions of morphisms between members of $\calh$.
\end{itemize}
Under certain conditions on $\calh$, these two conditions suffice to show 
that $\calf$ is saturated.

\begin{Prop} \label{p:H-sat-gen}
Let $\calf$ be a fusion system over a finite $p$-group $S$, and let 
$\calh\subseteq\scrs(S)$ be a nonempty set of subgroups closed under 
$\calf$-conjugacy. 
\begin{enuma} 

\item Assume that $\calf$ is $\calh$-generated and $\calh$-saturated, 
and that for each $P\in\calf^c\sminus\calh$ there is $Q\in P^\calf$ such 
that $\Out_S(P)\cap O_p(\outf(P))\ne1$. Then $\calf$ is saturated.

\item \textup{(Stancu's criterion)} Assume $\calh$ is closed under 
overgroups in $\scrs(S)$. Assume also that $S$ is fully automized in 
$\calf$, and that each $P\in\calh$ that is fully normalized in $\calf$ is 
also receptive in $\calf$. Then $\calf$ is $\calh$-saturated. 

\end{enuma}
\end{Prop}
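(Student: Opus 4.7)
My plan is to handle parts (b) and (a) separately; both rest on downward induction on $|P|$ along the subgroup lattice of $S$, but they exploit different extension mechanisms.

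I would begin with (b), which is cleaner. Since receptivity of fully normalized members of $\calh$ is hypothesized, the only thing to check is that every such $P$ is fully automized. I proceed by downward induction on $|P|$; the base case $P=S$ is the hypothesis. For the inductive step, suppose $P<S$ is fully normalized in $\calh$, and for contradiction pick $T\in\sylp{\autf(P)}$ with $\Aut_S(P)\lneq T$. Since $T$ is a $p$-group, $N_T(\Aut_S(P))\gneq\Aut_S(P)$, so I may choose $\varphi\in N_T(\Aut_S(P))\sminus\Aut_S(P)$. By construction $N_\varphi^\calf=N_S(P)$, so by receptivity $\varphi$ extends to $\bar\varphi\colon N_S(P)\to S$; because $\bar\varphi(P)=P$ the image lies in $N_S(P)$, so $\bar\varphi\in\autf(N_S(P))$, of $p$-power order. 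Since $\calh$ is closed under overgroups, $N_S(P)\in\calh$; pick a fully normalized $\calf$-conjugate $N^*\in\calh$ with $\psi\in\isof(N_S(P),N^*)$. The inductive hypothesis (applicable since $|N^*|=|N_S(P)|>|P|$) gives that $N^*$ is fully automized, so $\psi\bar\varphi\psi^{-1}$ is $\autf(N^*)$-conjugate into $\Aut_S(N^*)$. Tracing the conjugation back through $\psi$ produces $s\in N_S(P)$ with $c_s|_P=\varphi$, contradicting $\varphi\notin\Aut_S(P)$.

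For part (a), the aim is to promote $\calh$-saturation to full saturation by handling each $P\in\calf^c\sminus\calh$ via a pumping-up argument, proceeding again by downward induction on $|P|$ (the largest subgroups lie in $\calh$, so the base cases are covered by $\calh$-saturation). The key use of the hypothesis is: an $\calf$-conjugate $Q$ of $P$ with $\Out_S(Q)\cap O_p(\outf(Q))\ne1$ admits $x\in N_S(Q)\sminus Q$ whose class in $\outf(Q)$ is nontrivial and lies in $O_p(\outf(Q))$. Since $O_p(\outf(Q))$ is normal in $\outf(Q)$, for every $\alpha\in\autf(Q)$ the composite $\alpha c_x\alpha^{-1}$ again projects into the image of $\Aut_S(Q)$ modulo $\Inn(Q)$, so $\alpha$ extends from $Q$ to $\gen{Q,x}$ (possibly after adjustment). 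Combined with $\calh$-generation, this lets me transport any $\calf$-morphism defined on $Q$ to a morphism defined on the strictly larger subgroup $\gen{Q,x}$; the inductive hypothesis at this larger subgroup, followed by descent, yields the Sylow and extension axioms for $Q$, hence for $P$. Having thus obtained the axioms on all of $\calf^c$, I invoke the standard reduction (\cite[I.3.10]{AKO}-style, using $\calh$-generation) to extend saturation from $\calf^c$ to all subgroups of $S$.

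The main obstacle in both parts is the descent step: after conjugating up to an overgroup to exploit the induction, one must carefully return to the original subgroup and check that the constructed automorphism differs from the given one only by composition with an element of $\Aut_S$. I expect Lemma \ref{l:f.n.=>recept} referenced in the introduction to handle precisely this type of descent, which is why it is proved beforehand. The extra subtlety in (a) is integrating the pumping-up provided by $O_p(\outf(Q))$ with $\calh$-generation; one must show that extending a general morphism (a composite of restrictions of morphisms between members of $\calh$) to $\gen{Q,x}$ can be done factor by factor in a coherent way, which is the main technical content.
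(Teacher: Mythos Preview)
The paper does not actually prove this proposition: it simply cites \cite[Theorem 2.2]{BCGLO1} for (a) and \cite[Proposition I.9.3]{AKO} (together with Stancu \cite{Stancu}) for (b). So your proposal is not a comparison target in the usual sense---you are attempting to reconstruct arguments that the paper outsources.

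For part (b), your inductive strategy is the right one and matches Stancu's, but the descent step as written does not close. From $\delta\,\psi\bar\varphi\psi^{-1}\delta^{-1}=c_t^{N^*}$ with $t\in N_S(N^*)$ you only obtain that $\bar\varphi$ is conjugate via $\delta\psi$ to $c_t^{N^*}$; restricting to $P$ gives that $\varphi$ is $\calf$-conjugate to $c_t|_{\delta\psi(P)}\in\Aut_S(\delta\psi(P))$, not that $\varphi\in\Aut_S(P)$. You need $t\in N^*$ (so that $c_t^{N^*}\in\Inn(N^*)$ and conjugating back lands in $\Inn(N_S(P))$), and nothing you have arranged forces this. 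The actual argument in \cite[I.9.3]{AKO} handles this by proving the stronger inductive statement that for each fully normalized $P$ and each $Q\in P^\calf$ there is $\chi\in\homf(N_S(Q),S)$ with $\chi(Q)=P$; full automization is then read off from this. Your sketch skips exactly this strengthening.

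For part (a) there are two problems. First, your key claim that ``for every $\alpha\in\autf(Q)$ the composite $\alpha c_x\alpha^{-1}$ again projects into the image of $\Aut_S(Q)$ modulo $\Inn(Q)$'' is false: normality of $O_p(\outf(Q))$ only gives $[\alpha c_x\alpha^{-1}]\in O_p(\outf(Q))$, not that it lies in $\Out_S(Q)$. The hypothesis provides a single nontrivial element of $\Out_S(Q)\cap O_p(\outf(Q))$, not that the whole of $O_p(\outf(Q))$ sits inside $\Out_S(Q)$. So the extension of an arbitrary $\alpha$ to $\gen{Q,x}$ does not follow as you suggest, and the ``pumping-up'' paragraph is not a proof. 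Second, you propose invoking Lemma~\ref{l:f.n.=>recept} for the descent; but that lemma appears \emph{after} this proposition in the paper and its proof uses Proposition~\ref{p:H-sat-gen}(b) in its final line, so appealing to it here would be circular. The argument actually used for (a) (in \cite{BCGLO1}) is substantially more delicate than your outline indicates.
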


\begin{proof} Point (a) is shown in \cite[Theorem 2.2]{BCGLO1} (see also 
the discussion in \cite[Theorem I.3.10]{AKO}). 

By \cite[Proposition I.9.3(c$\Rightarrow$a)]{AKO}, Stancu's criterion for 
saturation \cite{Stancu} implies that in Definition \ref{d:fusion}(d). 
Point (b), the corresponding implication for $\calh$-saturation, holds by 
the same argument whenever $\calh$ is closed under overgroups. 
\end{proof}

We will need the following version of Alperin's fusion theorem for fusion 
systems. 

\begin{Thm} [{\cite[Theorem A.10]{BLO2}}] \label{t:AFT}
If $\calf$ be a saturated fusion system over a finite $p$-group $S$, then 
each morphism in $\calf$ is a composite of restrictions of automorphisms of 
subgroups that are $\calf$-centric and $\calf$-radical.
\end{Thm}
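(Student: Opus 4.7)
My plan is downward induction on $|P|$ applied to morphisms $\varphi \in \homf(P,S)$. The base case $P = S$ is immediate: $S$ is $\calf$-centric since $C_S(S) = Z(S) \le S$, and $\calf$-radical because by the Sylow axiom $\auts(S) = \Inn(S) \in \sylp{\autf(S)}$, forcing $\outf(S)$ to have order prime to $p$ and hence $O_p(\outf(S)) = 1$. Thus $\varphi \in \autf(S)$ already has the required form.

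For the inductive step $P < S$, I first reduce to decomposing automorphisms of fully normalized subgroups. Fix a fully normalized $P^* \in P^\calf$ and $\calf$-isomorphisms $\gamma \in \isof(P,P^*)$, $\delta \in \isof(\varphi(P),P^*)$, so that $\varphi = \delta^{-1} \circ \alpha \circ \gamma$ with $\alpha := \delta\varphi\gamma^{-1} \in \autf(P^*)$. By the Sylow axiom $P^*$ is fully centralized, and then by the extension axiom it is receptive. Thus $\gamma$ extends to $\4\gamma \in \homf(N_\gamma^\calf,S)$ with $N_\gamma^\calf$ strictly larger than $P$ (unless $P$ was already fully normalized, in which case we may take $P^* = P$, $\gamma = \Id_P$, and no extension is needed). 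By induction $\4\gamma$ decomposes, and so does its restriction $\gamma$; the same argument handles $\delta$. It therefore suffices to decompose $\alpha \in \autf(P^*)$.

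If $P^* \in \calf^{cr}$ we are done. Otherwise I construct $P^* < P^{**} \le N_S(P^*)$ to which $\alpha$ extends as an $\calf$-automorphism; by induction $\4\alpha \in \autf(P^{**})$ decomposes, and $\alpha$ is its restriction. Case (i): $P^*$ not $\calf$-centric. Set $P^{**} = P^*C_S(P^*)$; each $x \in C_S(P^*)$ gives $c_x^{P^*} = \Id$, so $\alpha c_x^{P^*}\alpha^{-1} = \Id \in \auts(P^*)$, hence $P^{**} \le N_\alpha^\calf$, and receptiveness yields $\4\alpha \in \homf(P^{**},S)$. Case (ii): $P^*$ is $\calf$-centric but not $\calf$-radical. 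Then $O_p(\outf(P^*)) \ne 1$; since $P^*$ is fully automized, $\outs(P^*) \in \sylp{\outf(P^*)}$, so $O_p(\outf(P^*)) \le \outs(P^*)$. Its preimage $P^{**}$ in $N_S(P^*)$ strictly contains $P^*$ and satisfies $\Aut_{P^{**}}(P^*) \nsg \autf(P^*)$. Then each $\alpha \in \autf(P^*)$ normalizes $\Aut_{P^{**}}(P^*)$, so $P^{**} \le N_\alpha^\calf$, and receptiveness provides $\4\alpha$.

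The main obstacle is to verify that in each case $\4\alpha(P^{**}) = P^{**}$, so that $\4\alpha$ is genuinely an element of $\autf(P^{**})$ rather than merely a morphism into $S$. In case (i), $\4\alpha(C_S(P^*))$ centralizes $\4\alpha(P^*) = P^*$, hence $\4\alpha(C_S(P^*)) \le C_S(P^*)$, and an order count gives $\4\alpha(P^{**}) = P^{**}$. In case (ii), $\4\alpha$ conjugates $\Aut_{P^{**}}(P^*)$ to $\Aut_{\4\alpha(P^{**})}(P^*)$, and by normality these two subgroups of $\autf(P^*)$ coincide; $\calf$-centricity of $P^*$ (giving $C_S(P^*) = Z(P^*) \le P^*$) then allows us to recover subgroups of $N_S(P^*)$ above $P^*$ from their automizers on $P^*$, forcing $\4\alpha(P^{**}) = P^{**}$. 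With these verifications the downward induction on $|P|$ closes at both the morphism-reduction and automorphism-extension steps, yielding the desired decomposition.
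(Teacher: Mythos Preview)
The paper does not supply its own proof of this statement: Theorem~\ref{t:AFT} is quoted from \cite[Theorem A.10]{BLO2} without argument, so there is nothing to compare against directly. Your outline follows the standard downward induction, and the treatment of $\alpha\in\autf(P^*)$ in Cases (i) and (ii), including the verifications that $\4\alpha(P^{**})=P^{**}$, is correct. However, there is a genuine gap in the reduction step.

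You fix an \emph{arbitrary} $\gamma\in\isof(P,P^*)$ and then assert that $N_\gamma^\calf$ is strictly larger than $P$ whenever $P$ is not fully normalized. This is not justified and need not hold. One always has $N_\gamma^\calf\ge PC_S(P)$, but when $C_S(P)\le P$ the strict inequality $N_\gamma^\calf>P$ is equivalent to $\Out_S(P)\cap\gamma^{-1}\Out_S(P^*)\gamma\ne1$ inside $\outf(P)$, and two $p$-subgroups of $\outf(P)$ --- even with one of them Sylow --- may intersect trivially. In that situation the ``extension'' $\4\gamma$ is just $\gamma$ again and no inductive progress is made. The same issue affects $\delta$.

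The standard fix is to choose $\gamma$ more carefully rather than extend a random one. Since $P^*$ is fully automized, the $p$-subgroup $\gamma\Aut_S(P)\gamma^{-1}$ of $\autf(P^*)$ lies in some $\autf(P^*)$-conjugate of $\Aut_S(P^*)$, so there exists $\beta\in\autf(P^*)$ with $(\beta\gamma)\Aut_S(P)(\beta\gamma)^{-1}\le\Aut_S(P^*)$; then $N_{\beta\gamma}^\calf=N_S(P)>P$. Replacing $\gamma$ by $\beta\gamma$ (and absorbing $\beta^{-1}$ into $\alpha$) closes the gap. Equivalently, invoke \cite[Lemma~I.2.6(c)]{AKO} directly to obtain a morphism in $\homf(N_S(P),S)$ carrying $P$ to $P^*$. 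With this correction in place, your argument goes through.
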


If $\calf$ is a fusion system over a finite $p$-group $S$, and 
$\beta\:S\too T$ is an isomorphism of groups, then $\9\beta\calf$ denotes 
the fusion system over $T$ defined by setting 
	\[ \Hom_{\9\beta\calf}(P,Q) = \bigl\{ \beta\varphi\beta^{-1} \,\big|\, 
	\varphi\in\homf(\beta^{-1}(P),\beta^{-1}(Q)) \bigr\} \]
for all $P,Q\le T$. (Recall that we compose from right to left.) In terms 
of this notation, two fusion systems $\calf$ over $S$ and $\cale$ 
over $T$ are \emph{isomorphic} if there is an isomorphism of groups 
$\beta\:S\xto{~\cong~}T$ such that $\cale=\9\beta\calf$.

\begin{Defi}[{\cite[Definition I.6.1]{AKO}}] \label{d:Fnormal}
Fix a saturated fusion system $\calf$ over a finite $p$-group $S$, and let 
$\cale\le\calf$ be a saturated fusion subsystem over $T\le S$. Then $\cale$ 
is \emph{normal} in $\calf$ (denoted $\cale\nsg\calf$) if the following 
four conditions are satisfied: 
\begin{itemize}

\item $T$ is strongly closed in $\calf$; 

\item \emph{(invariance condition)} $\9\alpha\cale=\cale$ for each 
$\alpha\in\Aut_{\calf}(T)$; 

\item \emph{(Frattini condition)} for each $P\leq T$ and each $\varphi \in 
\Hom_{\calf}(P,T)$, there are $\alpha\in\Aut_{\calf}(T)$ and 
$\varphi_0\in\Hom_{\cale}(P,T)$ such that $\varphi=\alpha\circ\varphi_0$; 
and 
		
\item \emph{(extension condition)} each $\alpha\in\Aut_{\cale}(T)$ extends 
to an automorphism $\4{\alpha}\in\Aut_{\calf}(TC_S(T))$ such that 
$[\4{\alpha},C_S(T)]\defeq\gen{x^{-1}\4\alpha(x)\,|\,x\in C_S(T)}\leq Z(T)$.

\end{itemize}
\end{Defi}

The following elementary property of normal subsystems will be needed.

\begin{Lem} \label{l:Fnormal}
Let $\cale\nsg\calf$ be saturated fusion systems over $T\nsg S$. Let 
$\calf_0\le\calf$ be a saturated fusion subsystem over $S_0\le S$ such that 
$\calf_0\ge\cale$, and assume the extension condition holds for 
$\cale\le\calf_0$. Then $\cale\nsg\calf_0$.
\end{Lem}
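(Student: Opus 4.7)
The plan is to verify each of the four conditions in Definition \ref{d:Fnormal} for the pair $\cale\le\calf_0$. Three of these are essentially immediate. Strong closure of $T$ in $\calf_0$ transfers from strong closure in $\calf$, since every $\calf_0$-morphism is in particular a $\calf$-morphism. The invariance condition for $\cale$ under $\Aut_{\calf_0}(T)$ is inherited from the corresponding statement for $\cale\nsg\calf$, because $\Aut_{\calf_0}(T)\subseteq\Aut_\calf(T)$. The extension condition is assumed in the hypothesis.

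What remains is the Frattini condition: given $P\le T$ and $\varphi\in\Hom_{\calf_0}(P,T)$, I must factor $\varphi=\alpha\circ\varphi_0$ with $\alpha\in\Aut_{\calf_0}(T)$ and $\varphi_0\in\Hom_\cale(P,T)$. A first application of Frattini for $\cale\nsg\calf$ to $\varphi$, viewed as a $\calf$-morphism, yields such a factorization only with $\alpha\in\Aut_\calf(T)$; the technical work is to refine the choice of $\alpha$ so that it lies in $\Aut_{\calf_0}(T)$.

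My approach is to invoke Alperin's fusion theorem (Theorem \ref{t:AFT}) inside the saturated subsystem $\calf_0$, writing $\varphi$ as a composite of restrictions of automorphisms $\psi_i\in\Aut_{\calf_0}(Q_i)$ with $Q_i$ being $\calf_0$-centric and $\calf_0$-radical. Strong closure of $T$ keeps the successive images of $P$ inside $T$ at every stage, so each step can be analysed as a morphism between subgroups of $T$. Applying Frattini in $\calf$ to each such step gives a local factorization $\beta_i\circ\xi_i$ with $\beta_i\in\Aut_\calf(T)$ and $\xi_i$ in $\cale$; the invariance condition, used in the form $\xi\beta=\beta(\beta^{-1}\xi\beta)$ with $\beta^{-1}\xi\beta\in\cale$, lets me shuffle all the $\beta_i$ to the left past the $\cale$-morphisms and collect a single $\cale$-part on the right, producing $\varphi=\beta\circ\eta$ with $\beta=\beta_n\cdots\beta_1\in\Aut_\calf(T)$.

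The main obstacle is showing that this accumulated $\beta$ actually lies in $\Aut_{\calf_0}(T)$ rather than merely in $\Aut_\calf(T)$. Since $\psi_i\in\Aut_{\calf_0}(Q_i)$ restricts, by strong closure, to an element of $\Aut_{\calf_0}(Q_i\cap T)$, my plan is to choose each $\beta_i$ more carefully within its $\Aut_\cale(T)$-coset (modifying $\xi_i$ correspondingly), using the hypothesis that the extension condition holds for $\cale\le\calf_0$ to guarantee that enough automorphisms of $T$ can be realised inside $\calf_0$ to absorb each local factor there. Making this coset-adjustment step precise is the genuine content of the proof.
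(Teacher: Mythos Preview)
Your handling of strong closure, invariance, and the extension condition is fine. The gap is in the Frattini step, specifically in the last paragraph. You propose to adjust each $\beta_i$ within its $\Aut_\cale(T)$-coset so that it lands in $\Aut_{\calf_0}(T)$, invoking the extension condition for $\cale\le\calf_0$. But the extension condition only tells you that elements of $\Aut_\cale(T)$ extend to $\Aut_{\calf_0}(TC_{S_0}(T))$; it says nothing about whether a given coset $\beta_i\Aut_\cale(T)\subseteq\Aut_\calf(T)$ meets $\Aut_{\calf_0}(T)$. There is no reason it should: the subgroups $Q_i$ need not contain $T$, so $\psi_i$ itself does not give you an $\calf_0$-automorphism of $T$, and the extension condition is simply the wrong tool for manufacturing one. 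As written, this step does not go through.

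The paper avoids the whole difficulty by quoting \cite[Proposition I.6.4]{AKO}: for a saturated pair $\cale\le\calf$ with $T$ strongly closed, the invariance and Frattini conditions together are equivalent to the \emph{strong invariance condition} (for all $P\le Q\le T$, $\varphi\in\Hom_\cale(P,Q)$, $\psi\in\Hom_\calf(Q,T)$, one has $\psi\varphi(\psi|_P)^{-1}\in\Hom_\cale(\psi(P),\psi(Q))$). Strong invariance for $\cale\le\calf_0$ is immediate from strong invariance for $\cale\le\calf$, since every $\psi$ in $\calf_0$ is already in $\calf$. Applying \cite[Proposition I.6.4]{AKO} in the other direction then gives invariance and Frattini for $\cale\le\calf_0$ in one stroke, with no Alperin decomposition and no use of the extension condition beyond the one required by the definition. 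That is the missing idea.
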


\begin{proof} Since $\cale\nsg\calf$, the subgroup $T$ is strongly closed 
in $\calf$, and hence is also strongly closed in $\calf_0$. By 
\cite[Proposition I.6.4]{AKO} and since $\cale\nsg\calf$, the \emph{strong 
invariance condition} holds: for each pair of subgroups $P\le Q\le T$, each 
$\varphi\in\Hom_\cale(P,Q)$, and each $\psi\in\homf(Q,T)$, we have 
$\psi\varphi(\psi|_P)^{-1}\in\Hom_\cale(\psi(P),\psi(Q))$. 

The strong invariance condition for $\cale\le\calf_0$ follows immediately 
from that for $\cale\le\calf$. Hence by 
\cite[Proposition I.6.4]{AKO} again, $\cale\le\calf_0$ also satisfies the 
invariance and Frattini conditions. So if it also satisfies the extension 
condition, then $\cale\nsg\calf_0$.
\end{proof}

We will need to work with quotient fusion systems, but only in the special 
(and very simple) case where we divide by a central subgroup. 

\begin{Defi} \label{d:F/Z}
Let $\calf$ be a saturated fusion system over a finite $p$-group $S$, and 
assume $Z\le Z(\calf)$ is a central subgroup. 
\begin{enuma} 

\item Let $\calf/Z$ be the fusion system over $S/Z$ where for all 
$P,Q\le S$ containing $Z$, 
	\[ \Hom_{\calf/Z}(P/Z,Q/Z) = \bigl\{ \varphi/Z \,\big|\, 
	\varphi\in\homf(P,Q), ~ (\varphi/Z)(gZ)=\varphi(g)Z \bigr\} . \]

\item If $\cale\le\calf$ is a fusion subsystem over $T\le S$, 
then $Z\cale\le\calf$ is the fusion subsystem over $ZT$ where for $P,Q\le 
ZT$, 
	\[ \Hom_{Z\cale}(P,Q) = \{ \varphi\in\homf(P,Q) \,|\, 
	\varphi|_{P\cap T} \in \Hom_\cale(P\cap T,Q\cap T) \} . \]

\end{enuma}
\end{Defi}

If $\cale\nsg\calf$, then $Z\cale\le\calf$ is a special (and much more 
elementary) case of a construction of Aschbacher \cite[Theorem 
8.20]{A-gfit}.


\begin{Lem} \label{l:F/Z}
Let $\calf$ be a saturated fusion system over a finite $p$-group $S$, 
let $Z\le Z(\calf)$ be a central subgroup, and let $\cale\le\calf$ be a 
saturated fusion subsystem over $T\le S$. Then 
\begin{enuma} 

\item $\calf/Z$ and $Z\cale$ are both saturated; and 

\item if $Z(\cale)\le Z$, then $Z\cale/Z\cong\cale/Z(\cale)$. 

\end{enuma}
\end{Lem}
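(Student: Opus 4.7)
The strategy is to prove (a) and (b) separately, exploiting centrality of $Z$ in $\calf$ to reduce everything to properties of $\calf$ and $\cale$ via suitable correspondences on subgroups.

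For the saturation of $\calf/Z$ in (a), the assignment $P\mapsto P/Z$ is a bijection between $\{P\le S\mid Z\le P\}$ and $\scrs(S/Z)$. Centrality of $Z$ forces every $\varphi\in\homf(P,Q)$ with $P,Q\ge Z$ to fix $Z$ pointwise: extending $\varphi$ trivially on $Z$ (Definition \ref{d:subgroups}(d)) yields an extension that must equal $\varphi$ since $Z\le P$. Consequently $\varphi/Z$ is well-defined, the bijection preserves $\calf$-conjugacy classes and the fully-normalized / fully-centralized / fully-automized statuses, and the Sylow and extension axioms for $\calf/Z$ (in the form of Proposition \ref{p:BLO<=>RS}) are inherited from those for $\calf$. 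For saturation of $Z\cale$, I would similarly note that centrality forces every $\varphi\in\homf(P,Q)$ to fix $P\cap Z$ pointwise (even when $P,Q$ do not contain $Z$), so a $Z\cale$-morphism is uniquely determined by its identity action on $P\cap Z$ together with its restriction to $P\cap T$, which must lie in $\cale$. Using the map $P\mapsto P\cap T$ from $\scrs(ZT)$ to $\scrs(T)$ to transfer fully-normalized / fully-centralized / fully-automized statuses from $\cale$ to $Z\cale$, saturation of $Z\cale$ then follows by Proposition \ref{p:BLO<=>RS} applied using saturation of $\cale$.

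For (b), the main step is to show $Z\cap T=Z(\cale)$. The inclusion $Z(\cale)\le Z\cap T$ is immediate from $Z(\cale)\le T$ (always) and $Z(\cale)\le Z$ (by hypothesis). For the reverse inclusion, fix $z\in Z\cap T$; since $Z\le Z(\calf)$ centralizes $S$ we have $z\in Z(T)$. By Alperin's fusion theorem (Theorem \ref{t:AFT}), every morphism in $\cale$ is a composite of restrictions of automorphisms $\alpha\in\Aut_\cale(P)$ with $P$ both $\cale$-centric and $\cale$-radical. For any such $P$, $\cale$-centricity gives $z\in Z(T)\le C_T(P)\le P$; and extending $\alpha$ via centrality of $Z$ in $\calf$ to $\til\alpha\in\Aut_\calf(ZP)$ with $\til\alpha|_Z=\Id$ yields $\alpha(z)=\til\alpha(z)=z$. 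Hence every $\cale$-morphism fixes $z$ on its domain. The extension property needed for $z\in Z(\cale)$ (Definition \ref{d:subgroups}(d)) then follows: given $\varphi\in\Hom_\cale(P,R)$ with $R$ fully centralized in $\cale$, we have $c_z^P=\Id_P$ since $z\in Z(T)$, so $z\in N_\varphi^\cale$, and receptiveness produces an $\cale$-extension to $\langle z\rangle P$ which fixes $z$ by the previous paragraph.

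With $Z\cap T=Z(\cale)$ in hand, $Z\cale/Z$ and $\cale/Z(\cale)$ both sit over the common group $T/Z(\cale)\cong ZT/Z$. The isomorphism of fusion systems comes from an explicit correspondence: a morphism $\varphi/Z(\cale)$ in $\cale/Z(\cale)$ coming from $\varphi\in\Hom_\cale(P,Q)$ with $Z(\cale)\le P,Q\le T$ lifts, via centrality of $Z$ in $\calf$, to $\til\varphi\in\Hom_\calf(ZP,ZQ)$ with $\til\varphi|_Z=\Id$; since $\til\varphi|_P=\varphi\in\cale$, this $\til\varphi$ lies in $Z\cale$ by Definition \ref{d:F/Z}(b), giving $\til\varphi/Z$ in $Z\cale/Z$. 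Conversely, a $Z\cale$-morphism descends (via its restriction to the $T$-part, modulo $Z(\cale)$) to the matching $\cale/Z(\cale)$-morphism. The main obstacle is the hard inclusion $Z\cap T\le Z(\cale)$, which requires chaining Alperin's fusion theorem with the centrality of $Z$ in $\calf$ and the extension axiom for $\cale$; once it is established, the identification of fusion systems is essentially formal.
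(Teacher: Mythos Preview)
Your strategy mirrors the paper's: transfer saturation via $P\mapsto P/Z$ and $P\mapsto P\cap T$, and for (b) identify $Z\cap T$ with $Z(\cale)$ before writing down the isomorphism. The one notable difference is that you devote most of (b) to proving $Z\cap T\le Z(\cale)$ via Alperin's fusion theorem and receptiveness (correct, once you add the routine reduction from arbitrary $R$ to fully centralized $R$ in your last step), whereas the paper writes simply ``hence $Z(\cale)=Z\cap T$''. The implied one-line argument uses the characterization $Z(\cale)=\{t\in T\mid t^\cale=\{t\}\}$ valid for saturated $\cale$ (\cite[Lemma~I.4.2]{AKO}, invoked later in the paper): for $z\in Z\cap T\le Z(\calf)\cap T$ one has $z^\calf=\{z\}$, hence $z^\cale=\{z\}$ since $\cale\le\calf$, hence $z\in Z(\cale)$. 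Your route reaches the same conclusion but is heavier than needed.

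There is a genuine gap in your treatment of $Z\cale$ in (a). Your claim that a $Z\cale$-morphism from $P$ is ``uniquely determined by its identity action on $P\cap Z$ together with its restriction to $P\cap T$'' requires $P=(P\cap Z)(P\cap T)$, which fails for diagonal subgroups such as $P=\langle zt\rangle$ with $z\in Z\sminus T$, $t\in T\sminus Z$, and $Z\cap T=1$: here $P\cap Z=P\cap T=1$, yet $\Aut_{Z\cale}(P)=\Aut_\calf(P)$ can be nontrivial, so nothing is ``determined''. The paper's parallel assertion $\Aut_{Z\cale}(P)\cong\Aut_\cale(P\cap T)$ is equally terse at this point. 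Both arguments become correct once one restricts to subgroups $P\ge Z$, where Dedekind's law gives $P=Z(P\cap T)$; but one must then argue separately that checking the saturation axioms on such $P$ suffices.
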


\begin{proof} The statement that $\calf/Z$ is saturated is a special case 
of \cite[Proposition 5.11]{Craven}. 

For each $P\le ZT$, $\Aut_{Z\cale}(P)\cong\Aut_\cale(P\cap T)$ and 
$\Aut_{ZT}(P)\cong\Aut_T(P\cap T)$. So $P$ is fully automized in $Z\cale$ 
if and only if $P\cap T$ is fully automized in $\cale$. A similar 
argument shows that $P$ is receptive in $Z\cale$ if and only if 
$P\cap T$ is receptive in $\cale$, and thus $Z\cale$ is saturated since 
$\cale$ is saturated.

Assume that $Z(\cale)\le Z$, and hence that $Z(\cale)=Z\cap T$. Let 
$\psi\:T/Z(\cale)\xto{~\cong~}ZT/Z$ be the natural isomorphism. By the 
definitions, a morphism $\varphi/Z\in\Hom(P/Z,Q/Z)$ (for 
$\varphi\in\Hom(P,Q)$ such that $\varphi(Z)=Z$) lies in $Z\cale/Z$ if and 
only if $\psi^{-1}(\varphi/Z)\psi=(\varphi|_{P\cap T})/Z(\cale)$ lies in 
$\cale/Z(\cale)$. So $Z\cale/Z=\9\psi(\cale/Z(\cale))$, and hence 
$\cale/Z(\cale)\cong Z\cale/Z$. 
\end{proof}

Note that Lemma \ref{l:F/Z}(b) is a very elementary case of the second 
isomorphism theorem for fusion systems (see \cite[Proposition 
5.16]{Craven}). 

The following technical lemma will be useful later when proving that 
certain fusion systems are $\calh$-saturated.

\begin{Lem} \label{l:f.n.=>recept}
Let $\calf$ be a saturated fusion system over a finite $p$-group $S$, and 
let $\cale\le\calf$ be a fusion subsystem (not necessarily saturated) over 
$T\le S$. Let $\calh\subseteq\scrs(T)$ be a nonempty set of subgroups 
closed under $\cale$-conjugacy and overgroups in $T$, and assume that the 
following two properties hold for all $P\in\calh$: 
\begin{enumi} 

\item for each $\4P\le T$ containing $P$ and each $\varphi\in\homf(\4P,T)$, if 
$\varphi|_{P}\in\Hom_\cale(P,T)$, then 
$\varphi\in\Hom_\cale(\4P,T)$; and 

\item for each $\varphi\in\homf(N_T(P),S)$, there are $R\le S$ and 
$\psi\in\homf(R,T)$ such that $R\ge\gen{\varphi(N_T(P)),C_S(\varphi(P))}$ 
and $\psi\varphi\in\Hom_\cale(N_T(P),T)$. 

\end{enumi}
Then every subgroup $P\in\calh$ that is fully normalized or fully centralized 
in $\cale$ is also receptive (hence 
fully centralized) in $\cale$ and in $\calf$. If in addition, 
\begin{enumi}\setcounter{enumi}{2}
\item $\Inn(T)\in\sylp{\Aut_\cale(T)}$,
\end{enumi}
then $\cale$ is $\calh$-saturated.
\end{Lem}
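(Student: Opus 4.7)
The plan is to show first that $P$ is fully centralized in $\calf$ (and hence receptive in $\calf$ by the extension axiom), then to transfer receptivity to $\cale$ via condition~(i), and finally to apply Stancu's criterion (Proposition~\ref{p:H-sat-gen}(b)) under assumption~(iii). The key construction is as follows: since $\calf$ is saturated, I pick $P^{*}\in P^{\calf}$ fully normalized (hence fully automized and receptive) in $\calf$, and an isomorphism $\gamma\in\Iso_{\calf}(P,P^{*})$. By fully automizedness of $P^{*}$, post-composing $\gamma$ with a suitable element of $\Aut_{\calf}(P^{*})$ I may assume $\gamma\Aut_{T}(P)\gamma^{-1}\le\Aut_{S}(P^{*})$, so $N_{T}(P)\le N_{\gamma}^{\calf}$, and receptivity of $P^{*}$ yields an extension $\bar\gamma\in\homf(N_{T}(P),S)$. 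Applying~(ii) to $\bar\gamma$ produces $R\le S$ and $\psi\in\homf(R,T)$ with $R\supseteq\langle\bar\gamma(N_{T}(P)),C_{S}(P^{*})\rangle$ and $\psi\bar\gamma|_{N_{T}(P)}\in\Hom_{\cale}(N_{T}(P),T)$; I set $\hat P:=\psi(P^{*})=\psi\bar\gamma(P)\in P^{\cale}$.

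The heart of the argument is the chain
\[ |C_{S}(P^{*})|\le|C_{T}(\hat P)|\le|C_{T}(P)|\le|C_{S}(P)|. \]
The first inequality holds since $\psi$ is injective and $\psi(C_{S}(P^{*}))\subseteq T\cap C_{S}(\hat P)=C_{T}(\hat P)$. The second is where the hypothesis on $P$ enters: if $P$ is fully centralized in $\cale$ it is immediate from $\hat P\in P^{\cale}$, while if $P$ is fully normalized in $\cale$ the injective $\cale$-morphism $\psi\bar\gamma\colon N_{T}(P)\to N_{T}(\hat P)$ is forced to be an isomorphism (orders match by hypothesis), so restriction to centralizers gives $|C_{T}(\hat P)|=|C_{T}(P)|$. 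The third is trivial. Since $P^{*}$ is fully $\calf$-centralized, $|C_{S}(P^{*})|\ge|C_{S}(P)|$, forcing every inequality in the chain to be an equality. In particular $P$ is fully centralized in $\calf$ (hence receptive in $\calf$ by Proposition~\ref{p:BLO<=>RS}), $C_{S}(P)=C_{T}(P)\le T$, and $P$ is also fully centralized in $\cale$, since any $Q\in P^{\cale}\subseteq P^{\calf}$ satisfies $|C_{T}(Q)|\le|C_{S}(Q)|\le|C_{S}(P^{*})|=|C_{T}(P)|$.

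For receptivity in $\cale$: given any $\cale$-isomorphism $\varphi\colon Q\to P$ (with $Q\in P^{\cale}\subseteq\calh$), the $\calf$-receptivity of $P$ yields an extension $\bar\varphi\in\homf(N_{\varphi}^{\calf},S)$. For each $x\in N_{\varphi}^{\cale}\subseteq N_{\varphi}^{\calf}$ the relation $c_{\bar\varphi(x)}|_{P}=\varphi c_{x}^{Q}\varphi^{-1}\in\Aut_{T}(P)$ places $\bar\varphi(x)\in N_{T}(P)\cdot C_{S}(P)\le T$ by the previous step, so $\bar\varphi|_{N_{\varphi}^{\cale}}\in\homf(N_{\varphi}^{\cale},T)$; since its restriction to $Q$ is $\varphi\in\Hom_{\cale}(Q,T)$, condition~(i) applied with $Q\in\calh$ in the role of $P$ yields $\bar\varphi|_{N_{\varphi}^{\cale}}\in\Hom_{\cale}(N_{\varphi}^{\cale},T)$. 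Under~(iii), $T$ is fully automized in $\cale$; combined with closure of $\calh$ under overgroups in $T$ and the implication just proved, Stancu's criterion (Proposition~\ref{p:H-sat-gen}(b)) gives $\calh$-saturation of $\cale$. I expect the main obstacle to be exploiting the precise form of~(ii)---in particular the requirement $R\supseteq C_{S}(\varphi(P))$---since without this clause the centralizer of $P^{*}$ would not be driven into $T$ via $\psi$, the chain of inequalities would break, and the transfer from $\calf$-receptivity to $\cale$-receptivity via~(i) would collapse.
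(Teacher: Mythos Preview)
Your proof is correct and follows essentially the same approach as the paper: use (ii) together with a fully normalized $\calf$-conjugate of $P$ to force the chain of centralizer inequalities, deduce that $P$ is fully $\calf$-centralized with $C_S(P)=C_T(P)$, then transfer $\calf$-receptivity to $\cale$ via (i), and finish with Stancu's criterion. Your single chain $|C_S(P^{*})|\le|C_T(\hat P)|\le|C_T(P)|\le|C_S(P)|$ is in fact slightly more economical than the paper's argument, which applies (ii) a second time (to the inclusion $N_T(P)\hookrightarrow S$) to obtain $C_S(P)\le T$, a conclusion that already follows from the first application.
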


\begin{proof} Fix $P\in\calh$ that is fully normalized or fully centralized 
in $\cale$, and choose $P_2\in P^\calf$ that is fully normalized in 
$\calf$. By \cite[Lemma I.2.6(c)]{AKO}, there is 
$\varphi\in\homf(N_T(P),S)$ such that $\varphi(P)=P_2$. By (ii), there is 
$\psi\in\homf(R,T)$ such that $\gen{\varphi(N_T(P)),C_S(P_2)}\le R\le S$ 
and $\psi\varphi\in\Hom_\cale(N_T(P),T)$. 

Set $P_3=\psi(P_2)\in P^\cale$; thus $\varphi(N_T(P))\le N_R(P_2)$ and 
$\psi(N_R(P_2))\le N_T(P_3)$. Likewise, $\varphi(C_T(P))\le C_R(P_2)$ and 
$\psi(C_R(P_2))\le C_T(P_3)$, where $C_R(P_2)=C_S(P_2)$ by assumption in (ii). 
Then $\psi\varphi(C_T(P))=\psi(C_S(P_2))=C_T(P_3)$ 
since $P$ is fully centralized or fully normalized in $\cale$, 
and $P$ and $P_3$ are both fully 
centralized in $\calf$ and in $\cale$ since $P_2$ is. 

We claim that $C_S(P)\le T$. By (ii), applied with the inclusion of 
$N_T(P)$ into $S$ in the role of $\varphi$, there are 
$R\ge\gen{N_T(P),C_S(P)}$ and $\omega\in\homf(R,T)$ such that 
$\omega|_{N_T(P)}\in\Hom_\cale(N_T(P),T)$. Then 
$\omega(C_S(P))=C_S(\omega(P))\le T$ since $P$ is fully centralized in 
$\calf$, and $\omega(C_T(P))=C_T(\omega(P))$ since $P$ is fully centralized 
in $\cale$ and $\omega|_{C_T(P)}\in\Mor(\cale)$. 
So $C_S(P)=C_T(P)$.

We have now shown that $P$ is fully centralized and hence receptive in 
$\calf$. It remains to show that it is receptive in $\cale$. Let $Q\in 
P^{\cale}\subseteq\calh$ and $\rho\in\Iso_{\cale}(Q,P)$ be arbitrary, and 
consider the subgroups 
	\begin{align*} 
	N_\rho^\calf &= \{ x\in N_S(Q) \,|\, \rho c_x^{Q} \rho^{-1} 
	\in \Aut_S(P) \} \\
	N_\rho^{\cale} &= \{ x\in N_{T}(Q) \,|\, \rho c_x^{Q}\rho^{-1} 
	\in \Aut_{T}(P) \} \le N_\rho^\calf. 
	\end{align*}
Since $P$ is receptive in $\calf$, $\rho$ extends to some 
$\4\rho\in \homf(N_\rho^\calf,S)$. For each $x\in N_\rho^\cale$, 
$c_{\4\rho(x)}^P=\rho c_x^{Q}\rho^{-1}\in\Aut_T(P)$, and hence there 
is $y\in T$ such that $\4\rho(x)\in yC_S(P)$. We just showed that 
$C_S(P)\le T$, and so $\4\rho(x)\in T$. 

Thus $\4\rho$ restricts to $\5\rho\in \Hom_\calf(N_\rho^\cale,T)$. 
By (i) and since $\5\rho|_{Q}=\rho\in\Mor(\cale)$, 
$\5\rho\in\Hom_\cale(N_\rho^\cale,T)$, extending $\rho$. Since 
$\rho$ was arbitrary, this shows that $P$ is receptive in $\cale$. 

If in addition, (iii) holds, and $T$ is fully automized in 
$\cale$, then $\cale$ is $\calh$-saturated by Proposition 
\ref{p:H-sat-gen}(b).
\end{proof}

We end the section with two group theoretic lemmas which are included for 
convenient reference. The first is very elementary. 

\begin{Lem} \label{l:sylp(G)}
Let $\chi\:G\too K$ be a homomorphism of finite groups, and let $P\le G$ be 
a $p$-subgroup. Then $P\in\sylp{G}$ if and only if 
$\chi(P)\in\sylp{\chi(G)}$ and $\Ker(\chi|_P)\in\sylp{\Ker(\chi)}$.
\end{Lem}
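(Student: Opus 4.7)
The plan is to reduce the lemma to an index computation using the first isomorphism theorem applied to $\chi$ and to its restriction $\chi|_P$.

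First I would write down the two multiplicative identities I will compare. Applying the first isomorphism theorem to $\chi\colon G\to K$, and separately to the restriction $\chi|_P\colon P\to\chi(P)$ (whose kernel is $P\cap\Ker(\chi)=\Ker(\chi|_P)$), I get
\beq
|G|=|\Ker(\chi)|\cdot|\chi(G)|
\qquad\text{and}\qquad
|P|=|\Ker(\chi|_P)|\cdot|\chi(P)|.
\eeq
Taking the $p$-part of the first identity (the $p$-part is multiplicative on finite groups) gives $|G|_p=|\Ker(\chi)|_p\cdot|\chi(G)|_p$, where I write $|H|_p$ for the largest power of $p$ dividing $|H|$.

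Next I would use that $P$ is a $p$-group to bound each factor of $|P|$. Since $\Ker(\chi|_P)\le\Ker(\chi)$ is a $p$-subgroup, $|\Ker(\chi|_P)|\le|\Ker(\chi)|_p$, with equality if and only if $\Ker(\chi|_P)\in\sylp{\Ker(\chi)}$. Similarly $\chi(P)\le\chi(G)$ is a $p$-subgroup, so $|\chi(P)|\le|\chi(G)|_p$, with equality if and only if $\chi(P)\in\sylp{\chi(G)}$. Multiplying these two inequalities yields
\beq
|P|=|\Ker(\chi|_P)|\cdot|\chi(P)|\le|\Ker(\chi)|_p\cdot|\chi(G)|_p=|G|_p,
\eeq
with equality precisely when both of the individual inequalities are equalities.

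Finally, I would observe that $P\in\sylp{G}$ is equivalent to $|P|=|G|_p$. By the preceding paragraph this last equality holds if and only if both $\Ker(\chi|_P)\in\sylp{\Ker(\chi)}$ and $\chi(P)\in\sylp{\chi(G)}$, which is exactly what had to be shown. There is no real obstacle here; the only thing to be careful about is writing the two index identities and confirming that a product of upper bounds is attained exactly when each bound is attained.
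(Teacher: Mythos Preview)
Your proof is correct and follows essentially the same idea as the paper's proof, which simply records the index identity $|G:P|=|\chi(G):\chi(P)|\cdot|\Ker(\chi):\Ker(\chi|_P)|$ and leaves the rest to the reader. Your version just unpacks this identity via $p$-parts and makes explicit when equality occurs, so the approaches coincide.
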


\begin{proof} Just note that  $|G:P| = |\chi(G):\chi(P)| \cdot 
|\Ker(\chi):\Ker(\chi|_P)|$.
\end{proof}

The following well known result about automorphisms of $p$-groups is useful 
when identifying elements of $O_p(\autf(P))$ in a fusion system $\calf$ 
over $S\ge P$.

\begin{Lem} \label{l:Op(AutP)}
Let $P$ be a finite $p$-group, and let $1=P_0\le P_1\le\cdots\le P_k=P$ be 
a sequence of subgroups, all normal in $P$. Let $\Gamma\le\Aut(P)$ be a 
group of automorphisms that normalizes each of the $P_i$. Then for each 
$\alpha\in\Gamma$ such that $x^{-1}\alpha(x)\in P_{i-1}$ for each $1\le 
i\le k$ and each $x\in P_i$, we have $\alpha\in O_p(\Gamma)$.
\end{Lem}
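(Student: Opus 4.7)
The plan is to identify the set of $\alpha$ satisfying the hypothesis as a normal $p$-subgroup of $\Gamma$, and then invoke the definition of $O_p(\Gamma)$. Concretely, I would define
\[ K = \Set{\alpha\in\Aut(P)}{x^{-1}\alpha(x)\in P_{i-1} \text{ for all } 1\le i\le k \text{ and all } x\in P_i}, \]
which is the stabilizer in $\Aut(P)$ of the flag $1=P_0\le P_1\le\cdots\le P_k=P$ (acting trivially on each quotient). A short calculation shows $K$ is closed under composition and inverses, so $K\le\Aut(P)$.

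The main step, and the only nontrivial one, is to show that $K$ is a $p$-group. I would prove this by showing that any $\alpha\in K$ whose order $m$ is coprime to $p$ must be the identity, by induction on $i$ showing that $\alpha$ fixes $P_i$ pointwise. The base case $i=0$ is trivial, and assuming $\alpha|_{P_{i-1}}=\Id$, for $x\in P_i$ write $\alpha(x)=xy$ with $y\in P_{i-1}$ (possible by hypothesis). Then by induction on $n$, using $\alpha(y)=y$, one gets $\alpha^n(x)=xy^n$, so $1=\alpha^m(x)x^{-1}=y^m$; since $y$ has $p$-power order and $\gcd(m,p)=1$, $y=1$. Hence $\alpha=\Id$, proving $K$ is a $p$-group. (This is the classical Hall--Kaloujnine fact about stability groups of normal series in $p$-groups, so I would just cite it if a reference is preferred.)

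Next I would check that $K\cap\Gamma\nsg\Gamma$. For $\alpha\in K\cap\Gamma$ and $\beta\in\Gamma$, given $x\in P_i$, the element $\beta^{-1}(x)$ still lies in $P_i$ since $\beta$ normalizes $P_i$, so
\[ \beta^{-1}(x)^{-1}\cdot \alpha(\beta^{-1}(x)) \in P_{i-1}; \]
applying $\beta$ (which normalizes $P_{i-1}$) gives $x^{-1}\cdot(\beta\alpha\beta^{-1})(x)\in P_{i-1}$. Hence $\beta\alpha\beta^{-1}\in K\cap\Gamma$, proving normality.

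Combining the two previous steps, $K\cap\Gamma$ is a normal $p$-subgroup of $\Gamma$, hence contained in $O_p(\Gamma)$. The hypothesis on $\alpha$ in the lemma says precisely that $\alpha\in K\cap\Gamma$, so $\alpha\in O_p(\Gamma)$. I expect the induction argument showing $K$ is a $p$-group to be the only substantive ingredient; the normality verification and the final assembly are formal.
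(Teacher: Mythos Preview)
Your proposal is correct and follows essentially the same approach as the paper: define the subgroup $\Gamma_0=K\cap\Gamma$ of automorphisms acting trivially on each successive quotient, observe it is normal in $\Gamma$ because $\Gamma$ normalizes each $P_i$, and then show it is a $p$-group. The only difference is that the paper cites \cite[Lemma~5.3.3]{Gorenstein} for the fact that every element of $\Gamma_0$ has $p$-power order, whereas you spell out the standard coprime-order induction argument (which you rightly note is the classical stability-group fact and could also be cited).
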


\begin{proof} Let $\Gamma_0\le\Gamma$ be the subgroup of all 
$\alpha\in\Gamma$ that induce the identity on each $P_i/P_{i-1}$. Then 
$\Gamma_0\nsg\Gamma$ since $\Gamma$ normalizes each of the $P_i$. It 
remains only to prove that each element of $\Gamma_0$ has $p$-power order, 
and this is shown, for example, in \cite[Lemma 5.3.3]{Gorenstein}. 
\end{proof}


\section{Maps from fusion systems to groups}

We give in this section a very general setup for constructing saturated 
fusion subsystems: one that includes Theorem \ref{ThA} as a special case. 
Recall that when $\calf$ is a fusion system over $S$ and 
$\calh\subseteq\scrs(S)$ is a set of subgroups of $S$, we let $\calf^\calh$ 
denote the full subcategory of $\calf$ whose set of objects is $\calh$. 
Thus $\Mor(\calf^\calh)$ is the set of all morphisms in $\calf$ between 
subgroups in $\calh$. 

\begin{Hyp} \label{h:chi-setup}
Let $\calf$ be a saturated fusion system over a finite $p$-group $S$. Fix a 
subgroup $T\nsg S$, a set of subgroups $\calh\subseteq\scrs(S)$ such that 
$T\in\calh$, and a map $\chi\:\Mor(\calf^\calh)\too G$ for some finite 
group $G$. Assume the following hold:
\begin{enumi} 

\item \label{h2-1} $\calh$ is closed under $\calf$-conjugacy and 
overgroups; 

\item \label{h2-2} $\chi(\varphi\psi)=\chi(\varphi)\chi(\psi)$ whenever 
$\varphi,\psi\in\Mor(\calf^\calh)$ are composable, and $\chi(\incl_P^Q)=1$ 
for each $P\le Q$ in $\calh$;

\item \label{h2-3} $\chi(\Inn(T))=1$ and $\chi(\autf(T))=G$; and 


\item \label{h2-4} for each $P\le T$ not in $\calh$, there is $x\in 
N_T(P)\sminus P$ such that $c_x^P\in O_p(\autf(P))$. 

\end{enumi}
\end{Hyp}

We could instead define $\chi$ as a functor from $\calf^{\calh}$ to 
$\calb(G)$, where $\calb(G)$ is a category with one object and endomorphism 
group $G$, and then remove the first part of condition \eqref{h2-2}. We 
could also have defined $\chi$ to be a homomorphism from 
$\pi_1(|\calf^\calh|)$ to $G$, where $|\calf^\calh|$ is the geometric 
realization of the category $\calf^\calh$, and then removed \eqref{h2-2} 
completely. (See, e.g., Section III.2.2 and Proposition III.2.8 in 
\cite{AKO} for more detail.) But it seems simplest to work with a map $\chi$ 
as above. 

\begin{Not} \label{n:chi-setup}
Assume Hypotheses \ref{h:chi-setup}. Set $U=\chi(\Aut_S(T))\le G$, and 
let $\5\chi\:S\too U$ be the homomorphism that sends $x\in S$ to 
$\chi(c_x^T)\in\chi(\Aut_S(T))=U$. For each $V\le U$, define 
	\[ S_V = \5\chi^{-1}(V) = \{x\in S \,|\, \5\chi(x)\in V \} 
	\quad\text{and}\quad 
	\calh\7V = \calh\cap\scrs(S_V) = \{P\in\calh \,|\, P\le S_V \}. \] 
For each $H\le G$, define the fusion subsystem $\calf_H\le\calf$ over 
$S_{U\cap H}$ by setting
	\[ \calf_H = \Gen{\varphi\in\homf(P,Q) \,\big|\, 
	P,Q\in\calh\7{U\cap H},~ \chi(\varphi)\in H }. \]
In particular, we set $S_{\Id}=\Ker(\5\chi)$, and let $\calf_{\Id}$ 
be the fusion system over $S_{\Id}$ generated by morphisms in $\chi^{-1}(1)$. 
\end{Not}

Note that $S_U=S$ in the situation of Hypotheses \ref{h:chi-setup} and 
Notation \ref{n:chi-setup}. We will see later that 
$\calh=\calh\7U\supseteq\calf^{cr}$, and hence that $\calf_G=\calf$ by 
Theorem \ref{t:AFT} (Alperin's fusion theorem).

We first list some of the basic properties that always hold in the 
situation of Hypotheses \ref{h:chi-setup}.

\begin{Lem} \label{l:chi-setup}
Assume Hypotheses \ref{h:chi-setup} and Notation \ref{n:chi-setup}. Then 
the following hold:
\begin{enuma} 

\item \label{l:chi-1} For all $P\in\calh$ and $x\in N_S(P)$, we have 
$\5\chi(x)=\chi(c_x^P)$. For $P,Q\in\calh$ and $\varphi\in\homf(P,Q)$, we 
have $\9{\chi(\varphi)}\5\chi(x)=\5\chi(\varphi(x))$ for all $x\in P$. 

\item \label{l:chi-2} For each $Q,R\le U$, we have 
$\chi(\homf(S_Q,S_R))=\{g\in G\,|\,\9gQ\le R\}$. Also, $U\in\sylp{G}$. 

\item \label{l:chi-3} If $P\in\calh$, then $C_S(P)\le S_{\Id}$. 

\item \label{l:chi-4} If $\cald\le\calf$ is a fusion subsystem 
over $D\le S$ with $D\ge T$, then $\cald^{cr}\subseteq\calh$. More 
precisely, for each subgroup $P\in\cald^c\sminus\calh$, 
	\[ \Out_D(P) \cap O_p(\Out_\cald(P)) \ne 1. \]

\item \label{l:chi-5} If $V\le U$ is strongly closed in $U$ with respect to 
$G$, then $S_V$ is strongly closed in $\calf$. 

\end{enuma}
\end{Lem}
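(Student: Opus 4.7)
My plan is to prove the five parts in the order (a), (c), (b), (d), (e), with (d) being the main obstacle.

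\textbf{(a) and (c).} The key input is closure of $\calh$ under overgroups, so $PT\in\calh$ for every $P\le S$. For $x\in N_S(P)$, the functorial relations $\incl_T^{PT}\circ c_x^T = c_x^{PT}\circ \incl_T^{PT}$ and its analogue with $P$ in place of $T$, combined with (ii) (multiplicativity together with $\chi$ killing inclusions), yield $\chi(c_x^P) = \chi(c_x^{PT}) = \chi(c_x^T) = \5\chi(x)$. For the conjugation identity in (a), factor $\varphi\in\homf(P,Q)$ as $\incl_{\varphi(P)}^Q\circ\varphi_0$ with $\varphi_0\:P\xto{~\cong~}\varphi(P)$ (noting $\varphi(P)\in\calh$ by $\calf$-conjugacy closure), so $\chi(\varphi)=\chi(\varphi_0)$, and apply $\chi$ to $\varphi_0 c_x^P\varphi_0^{-1}=c_{\varphi(x)}^{\varphi(P)}$. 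Part (c) follows immediately: for $x\in C_S(P)$, $c_x^P=\Id_P=\incl_P^P$, so $\5\chi(x)=\chi(c_x^P)=1$.

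\textbf{(b).} Since $T\nsg S$, $T$ is fully normalized in $\calf$, hence by Proposition \ref{p:BLO<=>RS} fully automized and receptive. Lemma \ref{l:sylp(G)} applied to $\chi|_{\autf(T)}\:\autf(T)\twoheadrightarrow G$ then gives both $U=\chi(\Aut_S(T))\in\sylp{G}$ and $\Aut_S(T)\cap K\in\sylp{K}$ where $K=\Ker(\chi|_{\autf(T)})$. The forward inclusion in the main equality follows from (a) together with surjectivity of $\5\chi|_{S_Q}\:S_Q\twoheadrightarrow Q$. For the reverse, given $g\in G$ with $\9g Q\le R$, pick $\alpha_0\in\autf(T)$ with $\chi(\alpha_0)=g$ (via (iii)); then $\alpha_0\Aut_{S_Q}(T)\alpha_0^{-1}$ is a $p$-subgroup of $\chi^{-1}(U)\cap\autf(T)=\Aut_S(T)\cdot K$, in which an order count (using the two Sylow facts above) shows $\Aut_S(T)$ is itself Sylow. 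Sylow's theorem then yields $b\in K$ with $b\alpha_0\Aut_{S_Q}(T)\alpha_0^{-1}b^{-1}\le\Aut_S(T)$; setting $\alpha=b\alpha_0$ preserves $\chi(\alpha)=g$ and gives $S_Q\le N_\alpha^\calf$. Receptivity of $T$ extends $\alpha$ to $\4\alpha\in\homf(N_\alpha^\calf,S)$, and a direct computation using $\4\alpha|_T=\alpha$ together with $C_S(T)\le\Ker(\5\chi)$ shows $\5\chi(\4\alpha(x))=\9g\5\chi(x)\in R$ for $x\in S_Q$, so $\varphi:=\4\alpha|_{S_Q}\in\homf(S_Q,S_R)$ with $\chi(\varphi)=g$ (the latter from applying $\chi$ to the commutative square involving $\incl_T^{S_Q}$).

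\textbf{(d), the main obstacle.} The key reduction is that $\Aut_\cald(P)\cap O_p(\autf(P))$ is a normal $p$-subgroup of $\Aut_\cald(P)$, hence contained in $O_p(\Aut_\cald(P))$; it therefore suffices to produce $y\in N_D(P)\sminus P$ with $c_y^P\in O_p(\autf(P))$, since $P\in\cald^c$ forces $C_D(P)\le P$ and so $c_y^P\notin\Inn(P)$, giving non-triviality in $\Out_D(P)$. Setting $Q:=P\cap T$, closure of $\calh$ under overgroups forces $Q\notin\calh$, and condition (iv) applied to $Q$ furnishes $z\in N_T(Q)\sminus Q$ with $c_z^Q\in O_p(\autf(Q))$. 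When $P\le T$ we have $Q=P$ and $y=z$ works directly. For $P\not\le T$, the strategy is to apply Lemma \ref{l:Op(AutP)} to the series $1\le Q\le P$ (noting $N_S(P)\le N_S(Q)$ because $T\nsg S$, so conjugation autos from $N_T(P)$ automatically preserve $Q$), exploiting that any $y\in N_T(P)$ acts trivially on $P/Q$ (since $[y,P]\le P\cap T=Q$) and transferring the $O_p$-element $c_z^Q$ from $\autf(Q)$ into a $p$-element acting on the graded pieces of $P$. Making this transfer precise, in particular ensuring that the resulting conjugation auto of $P$ lands in $O_p(\autf(P))$ rather than merely in $O_p$ of the $Q$-preserving subgroup, is the delicate technical step.

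\textbf{(e).} Applying (d) with $\cald=\calf$ gives $\calf^{cr}\subseteq\calh$, so by Alperin's fusion theorem (Theorem \ref{t:AFT}) every $\calf$-morphism is a composite of restrictions of automorphisms $\alpha\in\autf(P)$ with $P\in\calh$. For such $\alpha$ and $x\in P\cap S_V$, (a) gives $\5\chi(\alpha(x))=\9{\chi(\alpha)}\5\chi(x)\in\9{\chi(\alpha)}V\cap U$, and strong closure of $V$ in $U$ with respect to $G$ forces this into $V$; hence $\alpha(x)\in S_V$. Iterating across the decomposition shows $S_V$ is preserved by every morphism of $\calf$, i.e., strongly closed.
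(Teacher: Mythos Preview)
Parts (a), (b), (c), and (e) follow essentially the same route as the paper, with only cosmetic differences (in (b), the paper conjugates $\9\beta\Aut_{S_Q}(T)$ directly into $\Aut_{S_R}(T)$ rather than first into $\Aut_S(T)$ and then checking the image lands in $S_R$, but your variant is fine).

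For (d) there is a genuine gap. Writing $P_0=P\cap T$ for what you call $Q$, hypothesis (iv) gives $z\in N_T(P_0)\sminus P_0$ with $c_z^{P_0}\in O_p(\autf(P_0))$, but this $z$ need not normalize $P$, so there is no automorphism $c_z^P$ to work with; you flag the passage as ``delicate'' but do not resolve it. The paper's missing idea is to collect \emph{all} such witnesses into a subgroup: set
\[ Q=\{\,t\in N_T(P_0)\mid c_t^{P_0}\in O_p(\autf(P_0))\,\}. \]
Since $T\nsg S$, the group $P$ normalizes both $T$ and $P_0=P\cap T$, hence $N_T(P_0)$; and conjugation by elements of $P$ acts on $\autf(P_0)$ by inner automorphisms, so preserves $O_p(\autf(P_0))$. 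Thus $P\le N_S(Q)$. On the other hand $Q\nleq P$, since the $z$ from (iv) lies in $Q\cap T$ but not in $P_0=P\cap T$, hence not in $P$. Therefore $PQ>P$ and $N_{PQ}(P)>P$, so one can choose $x\in N_Q(P)\sminus P\subseteq T\le D$. Now $c_x^P$ is defined, $c_x^P|_{P_0}\in O_p(\autf(P_0))$, and $[x,P]\le P\cap T=P_0$, and the paper invokes Lemma~\ref{l:Op(AutP)} to conclude $c_x^P\in O_p(\autf(P))$; combined with your correct reduction $\Aut_\cald(P)\cap O_p(\autf(P))\le O_p(\Aut_\cald(P))$ and $c_x^P\notin\Inn(P)$, this finishes (d). Your second worry, about landing in $O_p(\autf(P))$ rather than merely in $O_p$ of the $P_0$-stabilizer, is handled by this same appeal to Lemma~\ref{l:Op(AutP)} once one has the right $x$.
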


\begin{proof} \noindent\textbf{(a) } For $P\in\calh$ and $x\in N_S(P)$, we 
have $\5\chi(x)=\chi(c_x^T)=\chi(c_x^{PT})=\chi(c_x^P)$, where the last two 
equalities hold since $\chi(\incl_T^{PT})=1=\chi(\incl_P^{PT})$.

For $P,Q\in\calh$, $x\in P$, and $\varphi\in\homf(P,Q)$, we have $\varphi 
c_x^P=c_{\varphi(x)}^Q\varphi\in\homf(P,Q)$, so 
$\9{\chi(\varphi)}\5\chi(x)=\5\chi(\varphi(x))$. 

\smallskip

\noindent\textbf{(b) } Fix $Q,R\le U$. 
Then $\chi(\homf(S_Q,S_R))\subseteq\{g\in G\,|\,\9gQ\le R\}$  
by (a), and it remains to show the opposite inclusion. Fix $g\in 
G$ such that $\9g Q\le R\le U$: we must find $\varphi\in\homf(S_Q,S_R)$ 
such that $\chi(\varphi)=g$. Choose $\beta\in\autf(T)$ such that 
$\chi(\beta)=g$. 

Since $\chi(\Aut_{S_Q}(T))=Q$, 
	\[ \chi(\9\beta\Aut_{S_Q}(T)) = \9{\chi(\beta)}Q = \9gQ\le R = 
	\chi(\Aut_{S_R}(T)), \] 
and hence $\9\beta\Aut_{S_Q}(T)\le\Aut_{S_R}(T)\Aut_{\calf_{\Id}}(T)$. 
Since $\Aut_{S_{\Id}}(T)=\Aut_S(T)\cap\Aut_{\calf_{\Id}}(T)
\in\sylp{\Aut_{\calf_{\Id}}(T)}$ 
(recall $\Aut_{\calf_{\Id}}(T)\nsg\autf(T)$), and $\Aut_{S_R}(T)$ 
normalizes $\Aut_{\calf_{\Id}}(T)$ and contains $\Aut_{S_{\Id}}(T)$, there 
is $\gamma\in\Aut_{\calf_{\Id}}(T)$ such that 
$\9{\gamma\beta}\Aut_{S_Q}(T)\le \Aut_{S_R}(T)$. Also, $T$ is receptive in 
$\calf$ since it is normal in $S$, so $\gamma\beta$ extends to 
$\varphi\in\homf(S_Q,S_R)$, and 
$\chi(\varphi)=\chi(\gamma)\chi(\beta)=1\cdot g=g$. 

Since $\calf$ is saturated and $T$ is fully normalized (normal in $S$), we 
have $\Aut_S(T)\in\sylp{\autf(T)}$. Hence $U=\chi(\Aut_S(T))\in\sylp{G}$ 
(Lemma \ref{l:sylp(G)}). 

\smallskip

\noindent\textbf{(c) } If $P\in\calh$ and $x\in C_S(P)$, then 
$\5\chi(x)=\chi(c_x^P)=\chi(\Id_P)=1$ by \eqref{l:chi-1}, so $x\in S_{\Id}$. 
Thus $C_S(P)\le S_{\Id}$. 

\smallskip

\noindent\textbf{(d) } Let $\cald\le\calf$ be a fusion subsystem over $D\le 
S$ with $D\ge T$, and assume $P\in\cald^c\sminus\calh$. Set $P_0=P\cap 
T$, and let $Q=\{t\in N_T(P_0)\,|\,c_t\in O_p(\autf(P_0))\}$. Then $P\le 
N_S(Q)$, and $Q\nleq P$ by Hypotheses \ref{h:chi-setup}\eqref{h2-4} and 
since $P_0\notin\calh$. So $N_{PQ}(P)>P$, and we can choose $x\in 
N_Q(P)\sminus P$. Then $c_x^P|_{P_0}\in O_p(\autf(P_0))$ and $c_x^P$ 
induces the identity on $P/P_0$, so $c_x^P\in O_p(\autf(P))$ by Lemma 
\ref{l:Op(AutP)}. 

Thus $c_x^P\in O_p(\autf(P))\cap\Aut_\cald(P)\le O_p(\Aut_\cald(P))$. Also, 
$P\ge C_D(P)$ since $P$ is $\cald$-centric, and $[c_x^P]\ne1$ in $\Out(P)$ 
since $x\in T\le D$. So $c_x^P\notin\Inn(P)$, and $[c_x^P]\ne1$ in 
$\Out_D(P)\cap O_p(\Out_\cald(P))$. In particular, $P$ is not 
$\cald$-radical, and since this holds for all $P\in\cald^c\sminus\calh$, we 
conclude that $\cald^{cr}\subseteq\calh$. 

\smallskip

\noindent\textbf{(e) } Assume $V\le U$ is strongly closed in $U$ with 
respect to $G$. If $S_V$ is not strongly closed in $\calf$, then by Theorem 
\ref{t:AFT} (Alperin's fusion theorem), there are $P\in\calf^{cr}$, 
$\alpha\in\autf(P)$, and $x\in P\cap S_V$ such that $\alpha(x)\notin S_V$. 
Also, $P\in\calh$ by \eqref{l:chi-4}, applied with $\calf$ in the role of 
$\cald$. Then $\5\chi(x)\in V$ and 
$\5\chi(\alpha(x))\notin V$ are conjugate in $G$ by \eqref{l:chi-1}, 
contradicting the assumption that $V$ is strongly closed.
\end{proof}

We next look at some of the properties of the fusion subsystems $\calf_H$. 

\begin{Lem} \label{l:H<G}
Assume Hypotheses \ref{h:chi-setup} and Notation \ref{n:chi-setup}, fix a 
subgroup $H\le G$, and set $V=H\cap U$. 
\begin{enuma} 

\item \label{l:H<G-1} If $P\in\calh\7V$ is  fully normalized or fully 
centralized in $\calf_H$, then $P$ is receptive and fully centralized in 
$\calf_H$ and in $\calf$. 

\item \label{l:H<G-2} We have 
$\calf_H^{cr}\subseteq\calf_H^c\cap\calh \subseteq\calf^c$.

\end{enuma}
\end{Lem}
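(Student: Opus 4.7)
The plan for (a) is to apply Lemma~\ref{l:f.n.=>recept} to $\calf_H \le \calf$ over $S_V$, with $\calh\7V$ playing the role of the auxiliary family ``$\calh$'' of that lemma. This family is nonempty and closed under $\calf_H$-conjugacy and overgroups in $S_V$: indeed $T \in \calh\7V$, since Hypotheses~\ref{h:chi-setup}\eqref{h2-3} gives $\5\chi(T) = \chi(\Inn(T)) = 1 \le V$ and hence $T \le S_V$. It then suffices to verify conditions (i) and (ii) of Lemma~\ref{l:f.n.=>recept}.

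For (i), suppose $\varphi \in \homf(\4P, S_V)$ with $P \le \4P \le S_V$, $P \in \calh\7V$, and $\varphi|_P \in \Hom_{\calf_H}(P, S_V)$. The key observation is that every morphism $f$ in $\calf_H$ between objects in $\calh$ satisfies $\chi(f) \in H$: writing $f$ as a composite of restrictions of generators, closure of $\calh$ under $\calf$-conjugacy forces every intermediate subgroup to remain in $\calh$, so $\chi$ of each factor is defined and lies in $H$ by the definition of a generator, and multiplicativity (Hypotheses~\ref{h:chi-setup}\eqref{h2-2}) finishes. From $\chi(\varphi|_P) = \chi(\varphi)\chi(\incl_P^{\4P}) = \chi(\varphi)$, we conclude $\chi(\varphi) \in H$; since $\4P \in \calh\7V$ by overgroup closure, $\varphi$ is itself a generator of $\calf_H$.

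For (ii), given $\varphi \in \homf(N, S)$ with $N = N_{S_V}(P)$, set $g = \chi(\varphi)$ and $Q' = \5\chi(\varphi(N))$. By Lemma~\ref{l:chi-setup}\eqref{l:chi-1}, $Q' = \9g\,\5\chi(N) \le \9gV$; taking $R = S_{Q'}$, we have $\varphi(N) \le R$ trivially, while $C_S(\varphi(P)) \le S_{\Id} \le R$ by Lemma~\ref{l:chi-setup}\eqref{l:chi-3} applied to $\varphi(P) \in \calh$. Since $\9{g^{-1}}Q' = \5\chi(N) \le V$, Lemma~\ref{l:chi-setup}\eqref{l:chi-2} produces $\psi \in \homf(R, S_V)$ with $\chi(\psi) = g^{-1}$, whence $\chi(\psi\varphi) = 1 \in H$ exhibits $\psi\varphi$ as a generator of $\calf_H$ from $N$ to $S_V$, as required.

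For (b), the inclusion $\calf_H^{cr} \subseteq \calh$ is immediate from Lemma~\ref{l:chi-setup}\eqref{l:chi-4} with $\cald = \calf_H$. For $\calf_H^c \cap \calh \subseteq \calf^c$, fix $P \in \calh$ that is $\calf_H$-centric and choose $P^* \in P^{\calf_H}$ fully normalized in $\calf_H$ (so $P^* \in \calh$ by $\calf$-conjugacy closure). Part (a) yields that $P^*$ is fully centralized in $\calf$; Lemma~\ref{l:chi-setup}\eqref{l:chi-3} then gives $C_S(P^*) \le S_{\Id} \le S_V$, so $C_S(P^*) = C_{S_V}(P^*) \le P^*$ by $\calf_H$-centricity. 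Receptivity of $P^*$ in $\calf$ transfers this bound to every $Q \in P^\calf$: extending any iso $\rho \in \Iso_\calf(Q, P^*)$ to $\bar\rho$ on $Q \cdot C_S(Q) \subseteq N_\rho^\calf$ sends $C_S(Q)$ into $C_S(P^*) \le P^*$, forcing $C_S(Q) \le Q$. The main subtlety is the characterization used in (i) of $\calf_H$-morphisms between $\calh$-objects via their $\chi$-image, which hinges on controlling intermediate subgroups along a decomposition; once this is in hand, the remaining steps are routine bookkeeping with Lemma~\ref{l:chi-setup}.
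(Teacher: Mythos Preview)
Your argument is correct and follows essentially the same route as the paper: both parts reduce to Lemma~\ref{l:f.n.=>recept} (with $\calf_H$ over $S_V$ in the role of $\cale$ over $T$) together with the basic properties collected in Lemma~\ref{l:chi-setup}. The only differences are cosmetic: you spell out why any $\calf_H$-morphism between $\calh$-objects has $\chi$-value in $H$ (a point the paper uses silently), and in (b) you pass to a fully normalized $\calf_H$-conjugate and argue via receptivity, whereas the paper observes directly that $P\in\calf_H^c$ is already fully centralized in $\calf_H$ and then applies (a) to $P$ itself.
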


\begin{proof} \noindent\textbf{(a) } By Lemma \ref{l:f.n.=>recept}, this 
implication is true if the following two conditions hold.
\begin{enumi} 

\item \boldd{For each $P\le\4P$ in $\calh$ and $\varphi\in\homf(\4P,T)$, if 
$\varphi|_P\in\Hom_{\calf_H}(P,T)$, then 
$\varphi\in\Hom_{\calf_H}(\4P,T)$.}\vskip-15pt%
\noindent This holds since for each such $\varphi$, 
$\chi(\varphi)=\chi(\varphi)\chi(\incl_P^{\4P})=\chi(\varphi|_P)\in H$.

\item \boldd{For each $P\in\calh$ and $\varphi\in\homf(N_{S_V}(P),S)$, there 
are $R\le S$ and $\psi\in\homf(R,S_V)$, where 
$R\ge\gen{\varphi(N_{S_H}(P)),C_S(\varphi(P))}$ and 
$\psi\varphi\in\Hom_{\calf_H}(N_{S_V}(P),S_V)$.} To see this, set 
$g=\chi(\varphi)$: then $\chi(\varphi(N_{S_V}(P)))\le\9gV\cap U$ by Lemma 
\ref{l:chi-setup}\eqref{l:chi-1}. Set $R=S_{\9gV\cap U}$: by Lemma 
\ref{l:chi-setup}\eqref{l:chi-2}, there is $\psi\in\homf(R,S_V)$ such that 
$\chi(\psi)=g^{-1}$. Thus $\psi\varphi\in\Hom_{\calf_H}(N_{S_V}(P),S_V)$. 
Also, $\varphi(N_{S_V}(P))\le R$ by construction, and $C_S(\varphi(P))\le 
S_{\Id}\le R$ by Lemma \ref{l:chi-setup}\eqref{l:chi-3}.

\end{enumi}

\smallskip

\noindent\textbf{(b) } If $P\in\calf_H^{cr}$, then $P\in\calh$ by Lemma 
\ref{l:chi-setup}\eqref{l:chi-4}, while $P\in\calf_H^c$ by definition. So 
$\calf_H^{cr}\subseteq\calf_H^c\cap\calh$. If $P\in\calf_H^c\cap\calh$, 
then $P$ is fully centralized in $\calf_H$ since it is $\calf_H$-centric, 
and is fully centralized in $\calf$ by \eqref{l:H<G-1}. 
Also, $C_S(P)=C_{S_{V}}(P)\le P$ by Lemma \ref{l:chi-setup}\eqref{l:chi-3}, 
and hence $P\in\calf^c$. 
\end{proof}

We are now ready to determine under what conditions the fusion subsystems 
$\calf_H$ are saturated. 

\begin{Thm} \label{t:H<G}
Assume Hypotheses \ref{h:chi-setup} and Notation \ref{n:chi-setup}. For 
each subgroup $H\le G$, 
\begin{enuma} 

\item the fusion subsystem $\calf_H$ is saturated if and only if $H\cap 
U\in\sylp{H}$; and 

\item $\calf_H$ is saturated and normal in $\calf$ if and only if 
$H\nsg G$. 

\end{enuma}
\end{Thm}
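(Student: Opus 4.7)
I will first prove (a), then deduce (b) from (a) together with direct verifications of the four conditions in Definition \ref{d:Fnormal}.

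For (a) $(\Rightarrow)$, since $S_V$ is trivially fully normalized in $\calf_H$, the Sylow axiom forces $\Inn(S_V)\in\sylp{\Aut_{\calf_H}(S_V)}$. By Lemma \ref{l:chi-setup}\eqref{l:chi-2} we have $\chi(\autf(S_V))=N_G(V)$, and any $g\in N_H(V)$ has an $\autf(S_V)$-preimage which is itself a $\calf_H$-generator, so $\chi(\Aut_{\calf_H}(S_V))=N_H(V)$. Together with $\chi(\Inn(S_V))=V$, Lemma \ref{l:sylp(G)} gives $V\in\sylp{N_H(V)}$, hence $V\in\sylp{H}$.

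For the main direction (a) $(\Leftarrow)$, the strategy is to apply Lemma \ref{l:f.n.=>recept} to $\cale=\calf_H$ with ambient group $S_V$ and subgroup set $\calh\7V$, obtaining $\calh\7V$-saturation, then upgrade to full saturation via Proposition \ref{p:H-sat-gen}(a) using Lemma \ref{l:chi-setup}\eqref{l:chi-4} (applied with $\cald=\calf_H$ over $D=S_V\supseteq T$) to handle $\calf_H$-centric subgroups outside $\calh\7V$. Conditions (i) and (ii) of Lemma \ref{l:f.n.=>recept} are exactly Lemma \ref{l:H<G}\eqref{l:H<G-1}; the crux is condition (iii), namely $\Inn(S_V)\in\sylp{\Aut_{\calf_H}(S_V)}$. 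I would first reduce to the case that $S_V$ is fully normalized in $\calf$: by Lemma \ref{l:chi-setup}\eqref{l:chi-2} the $\calf$-conjugates of $S_V$ are precisely $\{S_{V'}:V'\in V^G,\,V'\le U\}$, so choosing $V^*=\9gV\le U$ with $|N_U(V^*)|$ maximal and transporting via $\varphi\in\homf(S_V,S_{V^*})$ with $\chi(\varphi)=g$ yields an isomorphism of fusion systems $\calf_H\cong\calf_{\9gH}$ reducing to this case (one checks $V^*=U\cap\9gH\in\sylp{\9gH}$ by a size comparison). Granting this, set $K=\ker(\chi|_{\Aut_\calf(S_V)})$; as $\chi$ is a group homomorphism, $K\nsg\Aut_\calf(S_V)$, and since $1\in H$ we have $K\le\Aut_{\calf_H}(S_V)$, so there is a short exact sequence $1\to K\to\Aut_{\calf_H}(S_V)\to N_H(V)\to1$. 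The Sylow axiom yields $\Aut_S(S_V)\in\sylp{\Aut_\calf(S_V)}$, hence $\Aut_S(S_V)\cap K\in\sylp{K}$ by normality of $K$; and a direct computation using $S_{\Id}\le S_V\le N_S(S_V)$ shows $\Aut_S(S_V)\cap K=\Inn(S_V)\cap K$, so $\Inn(S_V)\cap K\in\sylp{K}$. Combined with $\chi(\Inn(S_V))=V\in\sylp{N_H(V)}$ (from $V\in\sylp{H}$), Lemma \ref{l:sylp(G)} yields condition (iii).

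For (b) $(\Leftarrow)$, assume $H\nsg G$. Then $V=U\cap H\in\sylp{H}$ by standard Sylow theory, so (a) gives saturation. Strong closure of $S_V$ in $\calf$ follows from Lemma \ref{l:chi-setup}\eqref{l:chi-5} via the fact that $V$ is strongly closed in $U$ with respect to $G$ (immediate from $H\nsg G$). Invariance holds because for $\alpha\in\Aut_\calf(S_V)$ and any $\calf_H$-generator $\varphi$, $\chi(\alpha\varphi\alpha^{-1})=\9{\chi(\alpha)}\chi(\varphi)\in H$, using $\chi(\alpha)\in N_G(V)\le N_G(H)$. For Frattini, write $\chi(\varphi)=hn$ using $G=HN_G(V)$ and take $\alpha\in\Aut_\calf(S_V)$ with $\chi(\alpha)=n$; then $\varphi_0=\alpha^{-1}\varphi$ has $\chi(\varphi_0)=n^{-1}hn\in H$, so is a $\calf_H$-generator (the case $P\notin\calh$ reduces to this via Alperin's theorem). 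The extension condition is vacuous: Lemma \ref{l:chi-setup}\eqref{l:chi-3} gives $C_S(S_V)\le S_{\Id}\le S_V$, whence $S_VC_S(S_V)=S_V$ and $C_S(S_V)\le Z(S_V)$. Conversely, for (b) $(\Rightarrow)$, applying the Frattini condition to morphisms $T\to S_V$ of the form $\psi\in\autf(T)$ post-composed with inclusion -- these realize every $g\in G$ via $\chi$ by Hypothesis \ref{h:chi-setup}\eqref{h2-3} -- yields $G=N_G(V)H$; and the invariance condition applied with $\varphi\in\Aut_{\calf_H}(T)$ (whose $\chi$-image equals $H$, as every $h\in H$ arises from some $\psi\in\autf(T)$ that is then a $\calf_H$-generator) forces $\9{\chi(\alpha)}h\in H$ for all $\alpha\in\Aut_\calf(S_V)$ and $h\in H$, i.e., $N_G(V)\le N_G(H)$. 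Combining these two facts gives $\9gH=H$ for all $g\in G$, so $H\nsg G$.

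The main obstacle is condition (iii) of Lemma \ref{l:f.n.=>recept} in part (a): both the preliminary reduction to a fully normalized $S_V$ and the observation that $K=\ker(\chi|_{\Aut_\calf(S_V)})$ is normal in all of $\Aut_\calf(S_V)$ are needed in order to locate a Sylow of $K$ inside a Sylow of $\Aut_\calf(S_V)$. With these in place, the remainder assembles routinely from Lemmas \ref{l:chi-setup}, \ref{l:H<G}, \ref{l:f.n.=>recept} and Proposition \ref{p:H-sat-gen}(a).
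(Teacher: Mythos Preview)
Your proposal is correct and follows essentially the same route as the paper: reduce to $S_V$ fully normalized in $\calf$, verify conditions (i)--(iii) of Lemma~\ref{l:f.n.=>recept} (your kernel argument for (iii) is exactly the paper's analysis of the map $\chi_V$), upgrade to full saturation via Proposition~\ref{p:H-sat-gen}(a) and Lemma~\ref{l:chi-setup}\eqref{l:chi-4}, and then check the normality conditions directly. The only noteworthy difference is that the paper handles both converse directions more cheaply by working at $T$ rather than $S_V$: for (a)$(\Rightarrow)$ it observes $\Aut_{S_V}(T)\in\sylp{\Aut_{\calf_H}(T)}$ and applies $\chi$ (which hits $V$ and $H$ on the nose), and for (b)$(\Rightarrow)$ it uses in one line that $\calf_H\nsg\calf$ forces $\Aut_{\calf_H}(T)\nsg\autf(T)$ and hence $H=\chi(\Aut_{\calf_H}(T))\nsg\chi(\autf(T))=G$, bypassing your separate derivations of $G=N_G(V)H$ and $N_G(V)\le N_G(H)$.
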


\begin{proof} Set $V=H\cap U$ for short. If $\calf_H$ is 
saturated, then $\Aut_{S_V}(T)\in\sylp{\Aut_{\calf_H}(T)}$, and hence 
$V=\chi(\Aut_{S_V}(T))\in\sylp{H}$ by Lemma \ref{l:sylp(G)}.

Conversely, assume now that $V\in\sylp{H}$; we must show that $\calf_H$ is 
saturated. In Step 1, we reduce to the case where $S_V$ is fully normalized 
in $\calf$. In Step 2, we prove that $\calf_H$ is $\calh\7V$-saturated, and 
in Step 3, finish the proof that it is saturated. Point (b) is shown in 
Step 4. 


\smallskip

\noindent\textbf{Step 1: } We first show that it suffices to prove 
saturation of $\calf_H$ when $S_V$ is fully normalized in $\calf$. Let 
$H\le G$ be arbitrary, assuming only that $V\in\sylp{H}$. Choose 
$Q\in(S_V)^\calf$ that is fully normalized in $\calf$ and 
$\varphi\in\isof(S_V,Q)$, and set $g=\chi(\varphi)$. Then $Q\ge S_{\Id}$ since 
$S_{\Id}$ is strongly closed (Lemma \ref{l:chi-setup}\eqref{l:chi-5}), so 
$Q=S_W$ where $W=\5\chi(Q)=\9gV$ (Lemma 
\ref{l:chi-setup}\eqref{l:chi-1}). Then $\9\varphi\calf_H=\calf_{\9gH}$: 
by inspection, $c_\varphi$ sends the defining generators of $\calf_H$ (see 
Notation \ref{n:chi-setup}) to those of $\calf_{\9gH}$. In particular, 
$\calf_H$ is saturated if $\calf_{\9gH}$ is saturated, and so it suffices 
to prove the latter. Note that $W=\9gV\le \9gH\cap U$, with equality since 
$\9gV\in\sylp{\9gH}$. 

We assume from now on that $S_V=S_W$ is fully normalized in $\calf$. 

\smallskip

\noindent\textbf{Step 2: } We now show that $\calf_H$ is 
$\calh\7V$-saturated. By Lemma \ref{l:H<G}\eqref{l:H<G-1}, if 
$P\in\calh\7V$ is fully normalized in $\calf_H$, then it is also receptive 
in $\calf_H$. So by Proposition \ref{p:H-sat-gen}(b), it remains only to 
prove that $S_V$ is fully automized in $\calf_H$; i.e., that 
$\Inn(S_V)\in\sylp{\Aut_{\calf_H}(S_V)}$. 

By Lemma \ref{l:chi-setup}\eqref{l:chi-2}, the homomorphism  
	\[ \chi_V \: \autf(S_V) \Right4{} N_G(V)/V 
	\qquad\textup{defined by}\qquad \chi_V(\alpha)=\chi(\alpha)V \]
is well defined and surjective. Also, 
$\Aut_{\calf_H}(S_V)=\chi_V^{-1}(N_H(V)/V)$ by definition, and $N_H(V)/V$ 
has order prime to $p$ since $V\in\sylp{H}$. Thus $\Ker(\chi_V)$ has index 
prime to $p$ in $\Aut_{\calf_H}(S_V)$. Furthermore, 
$\Aut_S(S_V)\in\sylp{\autf(S_V)}$ since $S_V$ is fully normalized in 
$\calf$, so by Lemma \ref{l:sylp(G)}, $\Ker(\chi_V|_{\Aut_S(S_V)})$ is a 
Sylow $p$-subgroup of $\Ker(\chi_V)$ and hence of $\Aut_{\calf_H}(S_V)$. 
For $x\in N_S(S_V)$, 
	\[ c_x^{S_V}\in\Ker(\chi_V) ~\iff~ \chi(c_x^{S_V})\in V
	~\iff~ \5\chi(x)\in V ~\iff~ x\in S_V, \]
so that $\Ker(\chi_V|_{\Aut_S(S_V)})=\Inn(S_V)$, finishing the proof that 
$\Inn(S_V)\in\sylp{\Aut_{\calf_H}(S_V)}$.

\smallskip

\noindent\textbf{Step 3: } By Lemma \ref{l:chi-setup}\eqref{l:chi-4}, we  
have $\Out_{S_{V}}(P)\cap O_p(\Out_{\calf_H}(P))\ne1$ for each 
$P\in\calf_H^c\sminus\calh$. Also, $\calf_H$ is $\calh\7V$-saturated by 
Step 2, and is $\calh\7V$-generated by definition of $\calf_H$ (see 
Notation \ref{n:chi-setup}). So $\calf_H$ is saturated by Proposition 
\ref{p:H-sat-gen}. 

\smallskip

\noindent\textbf{Step 4: } If $H$ is normal in $G$, then $V=H\cap 
U\in\sylp{H}$ since $U\in\sylp{G}$, so $\calf_H$ is saturated by Step 3. 

By Lemma \ref{l:chi-setup}\eqref{l:chi-5}, $S_V$ is strongly closed in 
$\calf$. Each $\alpha\in\autf(S_{V})$ permutes the generating set used to 
define $\calf_H$, and hence induces an automorphism of $\calf_H$. So the 
invariance condition for $\calf_H\le\calf$ holds. The extension condition 
also holds since $C_S(S_V)\le S_{\Id}\le S_V$ by Lemma 
\ref{l:chi-setup}\eqref{l:chi-3}. 

Now, $G=N_G(V)H$ by the Frattini argument. Fix $P\in\calh\7{V}$ and 
$\varphi\in\homf(P,S_V)$, and let $h\in H$ and $g\in N_G(V)$ be such that 
$\chi(\varphi)=gh$. By Lemma \ref{l:chi-setup}\eqref{l:chi-2}, there is 
$\beta\in\autf(S_V)$ such that $\chi(\beta)=g$. Set 
$\varphi_0=\varphi\beta^{-1}$; then $\chi(\varphi_0)=h\in H$ and hence 
$\varphi_0\in\Hom_{\calf_H}(P,S_V)$. This proves the Frattini 
condition for $\calf_H\le\calf$ for morphisms between members of $\calh$, 
and the general case follows by Theorem \ref{t:AFT} and since 
$(\calf_H)^{cr}\subseteq\calh$ (Lemma \ref{l:chi-setup}\eqref{l:chi-4}). 
Thus $\calf_H\nsg\calf$. 

Conversely, assume $\calf_H$ is saturated and normal in $\calf$. Then $T\le 
S_V$ for each $V\le U$ since $\5\chi(T)=1$ by Hypotheses 
\ref{h:chi-setup}\eqref{h2-3}, and hence $T$ is an object in $\calf_H$. 
(This holds for all $H\le G$.) Since $\calf_H\nsg\calf$, we have 
$\Aut_{\calf_H}(T)\nsg\autf(T)$ (see, e.g., \cite[Proposition 
I.6.4(c)]{AKO}), and hence $H=\chi(\Aut_{\calf_H}(T))\nsg 
\chi(\autf(T))=G$. 
\end{proof}

\begin{Rmk} \label{rmk:linking}
For readers familiar with linking systems associated to fusion systems and 
their geometric realizations (see Sections III.2 and III.4 in \cite{AKO}), 
we note here that in the situation of Hypotheses \ref{h:chi-setup} and 
Notation \ref{n:chi-setup}, if $\call$ is a centric linking system 
associated to $\calf$, then it is straightforward to define a linking 
subsystem $\call_H\le\call$ associated to $\calf_H$ for each $H\le G$ such 
that $H\cap U\in\sylp{H}$. More precisely, if $\pi\:\call\too\calf$ is the 
structure functor for $\call$, then we set 
$\til\chi=\chi\circ\pi\:\Mor(\call^\calh)\too G$, where $\call^\calh$ is 
the full subcategory of $\call$ with objects in $\calh\cap\calf^c$, and let 
$\call_H\le\call$ be the subcategory with objects $\calf^c\cap\calh\7V$ 
($V=H\cap U$) and morphisms in $\til\chi^{-1}(H)$. Using the fact that 
$(\calf_H)^{cr}\subseteq\calf^c$ (see Lemma \ref{l:H<G}\eqref{l:H<G-2}), 
one easily checks that this is a linking system. 

This setup also gives information about the fundamental groups of the 
geometric realizations $|\calf^\calh|$ and $|\call|$. As noted earlier, the 
map $\chi\:\Mor(\calf^\calh)\too G$ can be regarded as a surjective 
homomorphism from $\pi_1(|\calf^\calh|)$ onto $G$. Since $|\call|$ and 
$|\call^\calh|$ are homotopy equivalent, where $\call^\calh\subseteq\call$ 
is the full subcategory with object set $\calh$ (see \cite[Theorem 
3.5]{BCGLO1}), $\til\chi$ induces a surjection from 
$\pi_1(|\call|)\cong\pi_1(|\call^\calh|)$ onto $G$, and the spaces 
$|\call_H|$ (for $H\le G$ as above) are equivalent to covering spaces of 
$|\call|$ (see \cite[Propsition III.2.9]{AKO}). 
\end{Rmk}


\section{The normalizer of a set of components}

We adopt here the notation and terminology used in \cite{O-KRS} for 
morphisms of fusion systems and sets of commuting subsystems. Recall
that we set $\kk=\{1,\dots,k\}$ for $k\ge1$. 

A sequence $\cale_1,\dots,\cale_k\le\calf$ of fusion subsystems over finite 
$p$-groups $T_1,\dots,T_k\le S$ \emph{commutes in $\calf$} if there is a 
morphism of fusion systems from $\xxx\cale$ to $\calf$ that extends the 
inclusions of the $\cale_i$ into $\calf$. In other words, there is a 
homomorphism of groups $I\:\xxx{T}\too S$ that sends $(t_1,\dots,t_k)$ to 
$t_1\cdots t_k$, and which induces a functor $\5I$ from the category 
$\xxx\cale$ to $\calf$. Equivalently:

\begin{Lem}[{\cite[Lemma 2.8]{O-KRS}}] \label{l:comm.subsys.}
Let $\calf$ be a saturated fusion system over a finite $p$-group $S$, and 
let $\cale_1,\dots\cale_k\le\calf$ be fusion subsystems over subgroups 
$T_1,\dots,T_k\le S$. Then $\cale_1,\dots\cale_k$ commute in $\calf$ if and 
only if $[T_i,T_j]=1$ for each $i\ne j$ in $\kk$, and for each $k$-tuple of 
morphisms $\varphi_i\in\Hom_{\cale_i}(P_i,Q_i)$ (for $i\in\kk$), there is a 
morphism $\varphi\in\homf(P_1\cdots P_k,Q_1\cdots Q_k)$ such that 
$\varphi|_{P_i}=\varphi_i$ for each $i$.
\end{Lem}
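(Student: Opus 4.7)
The forward direction amounts to unpacking the definition. If $I\:\xxx{T}\too S$ given by $(t_1,\dots,t_k)\mapsto t_1\cdots t_k$ is a group homomorphism, then comparing $I$ evaluated on $(1,\dots,t_i,\dots,1)\cdot(1,\dots,t_j,\dots,1)$ with $I$ evaluated on the reversed product forces $t_it_j=t_jt_i$, so $[T_i,T_j]=1$ for $i\ne j$. Given morphisms $\varphi_i\in\Hom_{\cale_i}(P_i,Q_i)$, apply the functor $\5I$ to the tuple $(\varphi_1,\dots,\varphi_k)$ viewed as a morphism in $\xxx\cale$ from $(P_1,\dots,P_k)$ to $(Q_1,\dots,Q_k)$; the resulting morphism $\varphi\in\homf(P_1\cdots P_k,Q_1\cdots Q_k)$ restricts to $\varphi_i$ on $P_i$ because $\5I$ extends the inclusion $\cale_i\hookrightarrow\calf$ along the $i$-th factor.

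For the converse, the commutativity hypothesis allows one to define the multiplication map $I\:\xxx{T}\too S$ and verify that it is a group homomorphism. To promote $I$ to a functor $\5I\:\xxx{\cale}\too\calf$, send $(P_1,\dots,P_k)$ to $P_1\cdots P_k$ and send a morphism $(\varphi_1,\dots,\varphi_k)$ to the $\varphi\in\homf(P_1\cdots P_k,Q_1\cdots Q_k)$ supplied by condition (2). The key observation is that such a $\varphi$ is \emph{unique}: it is a group homomorphism on $P_1\cdots P_k$, and the $P_i$ (being pairwise commuting by (1)) generate this group, so $\varphi$ is forced by the formula $\varphi(p_1\cdots p_k)=\varphi_1(p_1)\cdots\varphi_k(p_k)$. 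This formula is consistent because the images $\varphi_i(P_i)\le Q_i\le T_i$ also commute pairwise.

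Once uniqueness is in hand, functoriality is automatic: the identity $k$-tuple clearly maps to the identity on $P_1\cdots P_k$, and if $(\psi_i)_i$ and $(\varphi_i)_i$ are composable, then $\5I((\psi_i)_i)\circ\5I((\varphi_i)_i)$ restricts on each $P_i$ to $\psi_i\varphi_i$, so by uniqueness it agrees with $\5I((\psi_i\varphi_i)_i)$. Finally, by construction $\5I$ restricted to the $i$-th factor $\cale_i\subseteq\xxx\cale$ is the inclusion $\cale_i\hookrightarrow\calf$, so the pair $(I,\5I)$ is a morphism of fusion systems extending the inclusions, as required.

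The main (mild) obstacle is not in any one step but in confirming that the uniqueness argument genuinely applies to \emph{every} $k$-tuple of morphisms between \emph{arbitrary} subgroups $P_i\le T_i$, with no saturation or centricity hypothesis intervening; this is exactly why condition (2) is formulated as a statement about all morphisms in all the $\cale_i$, rather than, say, only about automorphisms of fully normalized subgroups. Beyond this, the verification is purely formal.
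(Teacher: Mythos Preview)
The paper does not give its own proof of this lemma; it is quoted from \cite[Lemma 2.8]{O-KRS} and stated without argument. Your proposal is correct and is essentially the only natural way to prove the equivalence: unwind the definition of a morphism $(I,\5I)\colon\xxx\cale\to\calf$ for the forward direction, and for the converse use pairwise commutativity to define $I$, then use uniqueness of the extension $\varphi$ (forced because the $P_i$ generate $P_1\cdots P_k$) to make the assignment $(\varphi_1,\dots,\varphi_k)\mapsto\varphi$ functorial. There is nothing to compare against in this paper.
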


When $\cale_1,\dots,\cale_k\le\calf$ commute in $\calf$, and 
$\5I\:\xxx\cale\too\calf$ is as above, we define the \emph{central 
product} of the $\cale_i$ in $\calf$ to be the fusion subsystem 
\begin{small} 
	\beqq \begin{split} 
	\cale_1&\cdots\cale_k = \5I(\xxx\cale) \\
	&= \Gen{ \varphi\in\homf(P,Q) \,\big|\, P=P_1\cdots P_k,~ 
	Q=Q_1\cdots Q_k,~ \varphi|_{P_i}\in\Hom_{\cale_i}(P_i,Q_i) ~\forall 
	i\in\kk } .
	\end{split} \label{e:E1..Ek} \eeqq
\end{small}%
The central product of commuting saturated fusion subsystems is always 
saturated: we leave it as an exercise to show that 
$\cale_1\cdots\cale_k\cong(\xxx\cale)/Z$ for some central subgroup 
$Z\le Z(\xxx\cale)$. (The image of an arbitrary morphism between saturated 
fusion systems is also saturated, but this is a much deeper theorem first 
shown by Puig: see, e.g., Proposition 5.11 and Corollary 5.15 in 
\cite{Craven}.)

Recall the fusion subsystems $Z\cale\le\calf$, for $\cale\le\calf$ and 
$Z\le Z(\calf)$, of Definition \ref{d:F/Z}(b). 

\begin{Lem} \label{l:E/Z=prod}
Let $\calf$ be a saturated fusion system over a finite $p$-group $S$, and 
let $\cale_1,\dots,\cale_k\le\calf$ be commuting saturated fusion 
subsystems over $T_1,\dots,T_k\le S$. Set $T=T_1\cdots T_k$ and 
$\cale=\cale_1\cdots\cale_k$, and set $Z=Z(\cale)$. Then 
\begin{enuma} 


\item for each $i\in\kk$, $\cale_i$ is the full subcategory of $\cale$ with 
objects the subgroups of $T_i$; 

\item if $t_i\in T_i$ for all $i\in\kk$ and $t=t_1\cdots t_k$, then 
$t^\cale=\{u_i\cdots u_k\,|\,u_i\in t_i^{\cale_i} \}$; 


\item $T/Z=(T_1Z/Z)\times\cdots\times(T_kZ/Z)$; and 


\item $\cale/Z = (Z\cale_1/Z)\times\cdots\times (Z\cale_k/Z)$ where 
$Z\cale_i/Z\cong\cale_i/Z(\cale_i)$ for each $i$. 

\end{enuma}
\end{Lem}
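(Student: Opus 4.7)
The plan is to derive everything from the identification $\cale\cong(\xxx\cale)/K$ with $K=\ker(I\:\xxx T\too T)$ central in $\xxx\cale$ (the exercise from the paragraph preceding the lemma). I first record a consequence of the commutation hypothesis: $T_i\cap T_j\le Z(\cale_i)\cap Z(\cale_j)$ for $i\ne j$. Indeed, any $\varphi_j\in\Mor(\cale_j)$ combines with $\Id_{T_i}$ to $(\Id_{T_i},\varphi_j)\in\Mor(\cale_i\times\cale_j)$, and well-definedness of $\5I(\Id_{T_i},\varphi_j)$ on $T_iT_j$ forces $\varphi_j$ to fix $T_i\cap T_j$ pointwise. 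Writing $Z_i=Z(\cale_i)$, this gives $K\le Z(\xxx\cale)=Z_1\times\cdots\times Z_k$, since any $(t_1,\dots,t_k)\in K$ satisfies $t_i\in T_i\cap(T_1\cdots\widehat{T_i}\cdots T_k)\le Z_i$.

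For (a), since $K\cap T_i=1$ when $T_i$ is embedded as the $i$-th factor of $\xxx T$, the composite $\cale_i\hookrightarrow\xxx\cale\xto{\5I}\cale$ is faithful. For fullness, let $\psi\in\Hom_\cale(P,Q)$ with $P,Q\le T_i$. By \eqref{e:E1..Ek}, $\psi$ is a composite of restrictions of morphisms $\5I(\varphi_1,\dots,\varphi_k)$ with $\varphi_j\in\Hom_{\cale_j}(P_j,Q_j)$ and $P_1\cdots P_k\supseteq P$. Since $P\le T_i$ one verifies $P_j\le T_i\cap T_j\le Z_j$ for $j\ne i$, and centrality forces $\varphi_j|_{P_j}=\Id_{P_j}$. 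The image morphism is thus $\varphi_i$ on $P_i$ extended by the identity through the central subgroups $P_j\le Z_i$ ($j\ne i$), which lies in $\Mor(\cale_i)$ by the extension property for central subgroups. Composites of such morphisms remain in $\cale_i$, completing (a). Part (b) follows analogously: for $t=t_1\cdots t_k$ with $t_j\in T_j$, every $u\in t^\cale$ lifts to a componentwise $\xxx\cale$-conjugate of $(t_1,\dots,t_k)$ modulo $K\le Z_1\times\cdots\times Z_k$, yielding $u_j\in t_j^{\cale_j}$.

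For (c) and (d), I first show $Z_i\le Z$: given $z_i\in Z_i$ and a generating morphism $\5I(\varphi_1,\dots,\varphi_k)$ of $\cale$, extend $\varphi_i$ within $\cale_i$ to fix $z_i$ while leaving $\varphi_j$ ($j\ne i$) unchanged; mapping under $\5I$ gives an extension in $\cale$ fixing $z_i$. For (c), the multiplication map $(T_1Z/Z)\times\cdots\times(T_kZ/Z)\too T/Z$ is surjective, and a tuple mapping to the trivial element lifts to $(t_1,\dots,t_k)\in\xxx T$ with $t_1\cdots t_k\in Z$; using (a) and the definition of $Z$, the centrality of $t_1\cdots t_k$ forces each $t_i\in Z_i$, hence $t_iZ/Z=1$ for all $i$. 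Part (d) then combines Lemma~\ref{l:F/Z}(b) (giving $Z\cale_i/Z\cong\cale_i/Z_i$ once $Z_i\le Z$) with the analog of (a) applied in $\cale/Z$, exhibiting $\cale/Z$ as a direct product of the $Z\cale_i/Z$ over the direct-product decomposition $T/Z=\prod(T_iZ/Z)$ from (c).

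The main obstacle is the fullness step in (a): reducing the representing tuple to the $i$-th factor depends on both the centrality of $T_i\cap T_j$ in each $\cale_\ell$ and the extension axiom for central subgroups of $\cale_i$, and must be applied carefully to composites. Once (a) is in hand, the remaining parts flow by bootstrapping through the identification $\cale\cong\xxx\cale/K$ and Lemma~\ref{l:F/Z}.
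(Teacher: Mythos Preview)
Your overall plan---work through the identification $\cale\cong(\xxx\cale)/K$ with $K\le\prod_iZ(\cale_i)$, then bootstrap to (b)--(d)---matches the paper's approach closely. The paper's proof of (a) is the one-line observation that $\cale_i$ is a full subcategory of $\xxx\cale$, hence its image under $\5I$ is full in $\cale=\5I(\xxx\cale)$; parts (b)--(d) then follow just as you outline, via \eqref{e:E1..Ek} and Lemma~\ref{l:F/Z}(b).

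There is, however, a genuine error in your fullness argument for (a). You write: ``Since $P\le T_i$ one verifies $P_j\le T_i\cap T_j\le Z_j$ for $j\ne i$.'' This is false. The $P_j$ are the domain components of a generating morphism whose domain $P_1\cdots P_k$ merely \emph{contains} $P$; nothing forces $P_j\le T_i$. For instance one could have $P_j=T_j$ for all $j$. What \emph{is} true is elementwise: for each $p\in P\le T_i$ and any decomposition $p=p_1\cdots p_k$ with $p_\ell\in P_\ell$, the tuple $(p_1,\dots,p_{i-1},p_ip^{-1},p_{i+1},\dots,p_k)$ lies in $K\le\prod_\ell Z_\ell$, so $p_j\in Z_j$ for $j\ne i$. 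Then $\varphi_j(p_j)=p_j$, and the restricted generator acts on $p$ as $p\mapsto\varphi_i(p_i)\cdot(pp_i^{-1})$ with $pp_i^{-1}\in Z_i$; this is the restriction to $P$ of the central extension of $\varphi_i$ over $P_iZ_i$, hence lies in $\Mor(\cale_i)$. With this correction your argument goes through, and the intermediate images stay in $T_i$ so composites cause no trouble.

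The paper avoids this elementwise bookkeeping by passing through the quotient description: a morphism $\psi\in\Hom_\cale(P,Q)$ with $P,Q\le T_i$ lifts to $\Phi\in\Hom_{\xxx\cale}(PK,QK)$, and in the direct product $\Phi=(\varphi_1,\dots,\varphi_k)|_{PK}$ with $\varphi_j$ defined on $\pi_j(PK)=\pi_j(K)\le Z_j$ for $j\ne i$, forcing $\varphi_j=\Id$ there. This is cleaner because the structure of morphisms in $\xxx\cale$ is already known, whereas working with composites of restrictions of generators as in \eqref{e:E1..Ek} requires exactly the elementwise repair above.
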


\begin{proof} By assumption, there is a functor $\5I$ from 
$\cale_1\times\cdots\times\cale_k$ into $\calf$ that restricts to the 
inclusion on each factor. Since each $\cale_i$ is a full subcategory of the 
direct product $\prod_{i=1}^k\cale_i$, its image under $\5I$ is a 
full subcategory of $\5I(\cale_1\times\cdots\times\cale_k)=\cale$. This 
proves (a). 

Point (b) follows immediately from \eqref{e:E1..Ek}. Since 
the center of a saturated fusion system is the set of elements whose 
conjugacy class has order 1 (see \cite[Lemma I.4.2]{AKO}), (b) implies that 
	\beqq \textup{for $t_i\in T_i$ (all $i\in\kk$), $t_1\cdots t_k\in 
	Z$ if and only if $t_i\in Z(\cale_i)$ for all $i$.} 
	\label{e:Z1..Zk} \eeqq

Let $I\:\xxx{T}\too T$ be the homomorphism $I(t_1,\dots,t_k)=t_1\cdots 
t_k$. Thus $I$ is surjective, and 
$I^{-1}(Z)=Z(\cale_1)\times\cdots\times Z(\cale_k)$
by \eqref{e:Z1..Zk}. This proves (c), and 
also shows that $I$ induces an isomorphism from 
$\prod_{i=1}^k(T_i/Z(\cale_i))$ to $T/Z$. It also proves that $Z\cap 
T_i=Z(\cale_i)$ for each $i$, and hence that the inclusion of $T_i$ into 
$ZT_i$ induces an isomorphism $T_i/Z(\cale_i)\cong ZT_i/Z\le T/Z$. Thus 
$T/Z$ is the direct product of its subgroups $ZT_i/Z$, and the independence 
of the factors $(T_iZ/Z)$ implies that $T_iZ\cap T_jZ=Z$ for any pair $i\ne 
j$ of distinct indices in $\kk$.

The equality $\cale/Z=\prod_{i=1}^k(Z\cale_i/Z)$ now follows immediately 
from the description of $\cale=\cale_1\cdots\cale_k$ in \eqref{e:E1..Ek}.
\end{proof}

We now describe the setup needed to state our main theorem on normalizers. 

\begin{Hyp} \label{h:Ei<F}
Let $\calf$ be a saturated fusion system over a finite $p$-group $S$, and 
let $\cale_1,\dots,\cale_k\le \calf$ (for some $k\ge1$) be saturated 
fusion subsystems over $T_1,\dots,T_k\le S$ that commute in $\calf$. 
Set $T=T_1\cdots T_k$ and $\cale=\cale_1\cdots\cale_k$, and assume 
\begin{enumi}

\item \label{h3-2} $\cale\nsg\calf$; 

\item \label{h3-3} $T_i\nleq Z(\cale)$ for each $i\in\kk$; and 

\item \label{h3-4} each element of $\autf(T)$ permutes the subgroups 
$T_i$ for $i\in\kk$. 

\end{enumi}
\end{Hyp}


By Lemma \ref{l:E/Z=prod}(a), each $\cale_i$ is the full subcategory of 
$\cale$ with objects the subgroups of $T_i$. Since each 
$\alpha\in\autf(T)$ permutes the subgroups $T_i$ by \eqref{h3-4}, this means that $\alpha$ 
also permutes the fusion subsystems $\cale_i$.

We also need some more notation. 

\begin{Not} \label{n:Ei<F}
Let $\cale_1,\dots,\cale_k\le\calf$ be as in Hypotheses \ref{h:Ei<F}. Let 
	$\chi_0\: \autf(T) \too \Sigma_k$
be the homomorphism defined by \eqref{h3-4}: $\chi_0(\alpha)=\sigma$ (for 
$\alpha\in\autf(T)$ and $\sigma\in\Sigma_k$) if 
$\alpha(T_iZ(\cale))=T_{\sigma(i)}Z(\cale)$ for each $i\in\kk$. Define also 
	\begin{align*} 
	Z&=Z(\cale), & 
	G&=\chi_0(\autf(T))\le\Sigma_k, \\
	\calh &= \{ P\le S \,|\, P\cap T_i \nleq Z 
	~\textup{for each $i\in\kk$} \}, &
	U&=\chi_0(\Aut_S(T))\le G. 
	\end{align*}
\end{Not}

In the next proposition, we give some other criteria that imply condition 
\eqref{h3-4} in Hypotheses \ref{h:Ei<F}. A fusion system is 
\emph{indecomposable} if it is not the direct product of two or more proper 
fusion subsystems. Recall also the focal subgroup $\foc(-)$ from Definition 
\ref{d:subgroups}(f).

\begin{Prop} \label{p:(iv')=>(iv)}
Assume $\cale_1,\dots,\cale_k\le\calf$ are saturated fusion systems over 
finite $p$-groups $T_1,\dots,T_k\le S$ that commute in $\calf$ and satisfy 
conditions \eqref{h3-2} and \eqref{h3-3} in Hypotheses \ref{h:Ei<F}. Then 
either of the conditions 
\begin{enumi} 

\item[\rm(\ref{h3-4}$'$) ] for each $i\in\kk$, $\cale_i/Z(\cale_i)$ is 
indecomposable and $\foc(\cale_i)=T_i$, or 

\item [\rm(\ref{h3-4}$''$) ] for each $i\in\kk$, $\cale_i/Z(\cale_i)$ is 
indecomposable, $T_i\ge Z(\cale_1\cdots\cale_k)$, and 
$Z(\cale_i/Z(\cale_i))=1$ 

\end{enumi}
implies condition \eqref{h3-4}. 
\end{Prop}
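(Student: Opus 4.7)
Fix $\alpha \in \autf(T)$. Since $\cale \nsg \calf$ by \eqref{h3-2}, the invariance condition of Definition~\ref{d:Fnormal} yields $\9\alpha\cale = \cale$; in particular $\alpha$ preserves the characteristic subgroup $Z := Z(\cale)$ of $\cale$ and induces a fusion-system automorphism $\bar\alpha$ of the quotient $\cale/Z$ (which is saturated by Lemma~\ref{l:F/Z}(a)). By Lemma~\ref{l:E/Z=prod}(d),
\[ \cale/Z \;=\; \cale'_1 \times \cdots \times \cale'_k, \qquad \cale'_i \;:=\; Z\cale_i/Z \;\cong\; \cale_i/Z(\cale_i), \]
a genuine direct product of saturated fusion systems, each factor being indecomposable by the hypothesis of (iv$'$) or (iv$''$). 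The strategy is first to show that $\bar\alpha$ must permute the factors $\cale'_i$, and then to lift this permutation back to a permutation of the $T_i$.

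The first step is a Krull--Schmidt type statement for fusion systems. Classically, any automorphism of a direct product of indecomposable finite groups permutes the factors provided each factor is either centerless or perfect; both conditions preclude nontrivial homomorphisms from one factor into the abelian center of another, and hence rule out ``diagonal'' automorphisms mixing different factors. The same dichotomy reappears in our setting: under (iv$''$), $Z(\cale'_i) = Z(\cale_i/Z(\cale_i)) = 1$, so every factor is centerless in the fusion-system sense; under (iv$'$), $\foc(\cale'_i) = T_i/Z(\cale_i)$ is the full Sylow of $\cale'_i$ (using $\foc(\cale_i) = T_i$), so $\cale'_i$ admits no nontrivial abelian quotient. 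In either case any fusion morphism from $\cale'_i$ into an abelian central piece of $\cale'_j$ ($j \ne i$) is trivial, so no diagonal fusion automorphism can mix the factors, and one obtains $\sigma \in \Sigma_k$ with $\bar\alpha(\cale'_i) = \cale'_{\sigma(i)}$ for all $i$, and in particular $\alpha(T_iZ) = T_{\sigma(i)}Z$.

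It remains to upgrade $\alpha(T_iZ) = T_{\sigma(i)}Z$ to $\alpha(T_i) = T_{\sigma(i)}$. Under (iv$''$), the hypothesis $T_i \ge Z$ forces $T_iZ = T_i$ and the conclusion is immediate. Under (iv$'$), the equality $\bar\alpha(\cale'_i) = \cale'_{\sigma(i)}$ lifts to $\9\alpha(Z\cale_i) = Z\cale_{\sigma(i)}$ inside $\calf$; then $\cale_i$ is recognizable within $Z\cale_i$ as the unique saturated normal subsystem whose Sylow $T_i = \foc(\cale_i)$ is the focal subgroup, and likewise for $\cale_{\sigma(i)}$ inside $Z\cale_{\sigma(i)}$, so $\9\alpha\cale_i = \cale_{\sigma(i)}$ and hence $\alpha(T_i) = T_{\sigma(i)}$. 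The main obstacle is executing the Krull--Schmidt step cleanly in the fusion-system setting: the classical ``no diagonal homomorphism'' argument must be recast in the language of fusion-system morphisms, and carrying out the verification that (iv$'$) and (iv$''$) each genuinely rule out the relevant diagonal fusion automorphisms is the most delicate part of the proof.
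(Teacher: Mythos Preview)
Your plan matches the paper's proof: both pass to $\cale/Z$, invoke a Krull--Schmidt theorem for fusion systems to see that the induced $\bar\alpha$ permutes the indecomposable factors $Z\cale_i/Z$, and then lift back via $T_i\ge Z$ under (iv$''$) or via focal subgroups under (iv$'$). For the step you flag as delicate, the paper simply cites \cite[Corollary~5.3]{O-KRS}, which supplies exactly the uniqueness of indecomposable factorizations under the hypothesis $\foc(\cale/Z)=T/Z$ or $Z(\cale/Z)=1$; and for the lift under (iv$'$), in place of your uniqueness-of-normal-subsystem argument the paper observes directly that since $\alpha$ permutes the $Z\cale_i$ (full subcategories of $\cale$ over the $T_iZ$), it permutes their focal subgroups $\foc(Z\cale_i)=\foc(\cale_i)=T_i$.
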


\begin{proof} Set $\cale=\cale_1\cdots\cale_k$, $T=T_1\cdots T_k$, and 
$Z=Z(\cale)\le Z(T)$. By Lemma \ref{l:E/Z=prod}(c,d), we have 
$\cale/Z=\prod_{i=1}^k(Z\cale_i/Z)$ and $T/Z=\prod_{i=1}^k(ZT_i/Z)$, where 
for each $i$, $Z\cale_i/Z\cong\cale_i/Z(\cale_i)$ and hence is 
indecomposable under either condition (\ref{h3-4}$'$) or (\ref{h3-4}$''$). 
Also, $\foc(\cale/Z)=T/Z$ (if (\ref{h3-4}$'$) holds) or $Z(\cale/Z)=1$ (if 
(\ref{h3-4}$''$) holds), and in either case, the factorization of $\cale/Z$ 
as a product of the indecomposable fusion subsystems $Z\cale_i/Z$ is unique 
by \cite[Corollary 5.3]{O-KRS}. 

Now assume $\alpha\in\Aut_\calf(T)\le\Aut(\cale)$. Then $\alpha$  induces 
an automorphism $\4\alpha\in\Aut(\cale/Z)$, and by the uniqueness of the 
factorization, there is $\sigma\in\Sigma_k$ such that 
$\4\alpha(T_iZ/Z)=T_{\sigma(i)}Z/Z$ and hence $\alpha(T_iZ)=T_{\sigma(i)}Z$ 
for each $i$. If (\ref{h3-4}$''$) holds, then $T_iZ=T_i$ for each $i$, so 
$\alpha$ permutes the $T_i$. If (\ref{h3-4}$'$) holds, then $\alpha$ 
permutes the fusion subsystems $Z\cale_i$ (since they are full 
subcategories of $\cale$ by Lemma \ref{l:E/Z=prod}(a)), and hence permutes 
the focal subgroups $\foc(Z\cale_i)=\foc(\cale_i)=T_i$. In either case, 
$\alpha(T_i)=T_{\sigma(i)}$, finishing the proof of \eqref{h3-4}. 
\end{proof}

The following definition of a component of a fusion system is taken 
from \cite{A-gfit}.

\begin{Defi} \label{d:component}
Let $\cale\le\calf$ be saturated fusion systems over finite $p$-groups 
$T\le S$. The subsystem $\cale$ is \emph{subnormal} in $\calf$ (denoted 
$\cale\snsg\calf$) if there is a sequence of subsystems 
$\cale=\cale_0\nsg\cale_1\nsg\cdots\nsg\cale_m=\calf$ each normal in the 
following one. The subsystem $\cale$ is \emph{quasisimple} if 
$\cale/Z(\cale)$ is simple and $\foc(\cale)=T$. The \emph{components} of 
$\calf$ are the subnormal saturated fusion subsystems of $\calf$ that are 
quasisimple. 
\end{Defi}

A saturated fusion system $\calf$ is \emph{constrained} if it has a normal 
subgroup $Q\nsg\calf$ that is $\calf$-centric. By (9.9.1) and (9.12.3) in 
\cite{A-gfit}, $\calf$ is constrained if and only if it has no components. 
The assumption in the following example that $\calf$ not be 
constrained is made to ensure that it has at least one component.

\begin{Ex} \label{p:Comp(F)}
Let $\calf$ be a saturated fusion system over a finite $p$-group $S$, and 
assume $\calf$ is not constrained. Then $\calf$ satisfies Hypotheses 
\ref{h:Ei<F}, with the set $\Comp(\calf)$ of components of $\calf$ in the 
role of $\{\cale_1,\dots,\cale_k\}$ and with their central product 
in the role of $\cale$. 
\end{Ex}

\begin{proof} All of these statements are shown in \cite[Chapter 
9]{A-gfit}. Since $\calf$ is not constrained, $C_S(O_p(\calf))\nleq 
O_p(\calf)$, so $O_p(\calf)$ is strictly contained in the generalized 
Fitting subsystem $F^*(\calf)$ by \cite[9.11]{A-gfit}, and hence 
$\Comp(\calf)\ne\emptyset$ by \cite[9.9]{A-gfit}. Also, $E(\calf)$ (defined 
in \cite{A-gfit} to be the smallest normal subsystem containing all 
components) is the central product of the components by 
\cite[9.9.1]{A-gfit}, and is normal in $\calf$ by \cite[9.8.1]{A-gfit}. 
Thus the components of $\calf$ commute and satisfy condition 
\eqref{h3-2}, and satisfy \eqref{h3-3} and (\ref{h3-4}$'$) since they 
are quasisimple. 
\end{proof}

We now start to look at some of the consequences of these hypotheses. 
Condition (a) in the next lemma is stated in two forms: a simpler form 
which suffices here in most cases, and a longer, more technical form needed 
in one of the proofs below. 

\begin{Lem} \label{l:Ei<F}
Assume Hypotheses \ref{h:Ei<F} and Notation \ref{n:Ei<F}. 
For $I\subseteq\kk$, set $T_I=\gen{T_i\,|\,i\in I}$. 
\begin{enuma} 

\item \label{l:Ei-1} For each $P,Q\le S$ and $\varphi\in\homf(P,Q)$, there 
is an element $\sigma\in G\le\Sigma_k$ such that $\varphi(P\cap T_i)\le 
T_{\sigma(i)}$ for each $i\in\kk$. If $P\in\calh$, then $Q\in\calh$, and 
this element $\sigma$ is unique. 

\item[\rm(a$'$) ] Fix $P,Q\le S$ and $\varphi\in\homf(P,Q)$. Then there are 
$\sigma\in G\le\Sigma_k$ and $\alpha\in\autf(T)$ such that $\varphi(P\cap 
T_I)\le T_{\sigma(I)}=\alpha(T_I)$ for each $I\subseteq\kk$. In particular, 
$\varphi(P\cap T_i)\le T_{\sigma(i)}\in(T_i)^\calf$ for each $i\in\kk$. If 
$j\in\kk$ is such that $P\cap T_j\nleq Z$, then 
$\varphi(P\cap T_j)\nleq T_\ell$ for $\ell\ne\sigma(j)$. 

\end{enuma}
Define $\chi\:\Mor(\calf^\calh)\too G$ by setting $\chi(\varphi)=\sigma$ 
whenever $P,Q\in\calh$ and $\varphi$ and $\sigma$ are as in \eqref{l:Ei-1}. 
Then 
\begin{enuma} \setcounter{enumi}{1}
\item \label{l:Ei-2} $\calf$, $T$, $\calh$, $\chi$, $G$, 
and $U$ together satisfy Hypotheses \ref{h:chi-setup}. 
\end{enuma}
\end{Lem}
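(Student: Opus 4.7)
The plan for (a$'$)/(a) is as follows: since $\cale \nsg \calf$, the subgroup $T$ is strongly closed in $\calf$, so every $\varphi \in \homf(P,Q)$ restricts to $\varphi|_{P\cap T} \in \homf(P\cap T, T)$. Applying the Frattini condition for $\cale\nsg\calf$ yields a decomposition $\varphi|_{P\cap T} = \alpha \circ \varphi_0$ with $\alpha \in \autf(T)$ and $\varphi_0 \in \Hom_\cale(P\cap T, T)$. Hypothesis \ref{h:Ei<F}(iv) then supplies $\sigma \in G$ with $\alpha(T_i) = T_{\sigma(i)}$. For $x \in P\cap T_i$, Lemma \ref{l:E/Z=prod}(b) gives $\varphi_0(x) \in x^\cale \subseteq T_i$, so $\varphi_0(P\cap T_i) \le T_i$ and hence $\varphi(P\cap T_i) \le \alpha(T_i) = T_{\sigma(i)}$.

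For the uniqueness in (a$'$) and the passage $P\in\calh \Rightarrow Q\in\calh$: Lemma \ref{l:E/Z=prod}(c) gives $T_j Z \cap T_\ell Z = Z(\cale)$ for $j \ne \ell$, hence $T_j \cap T_\ell \le Z$. If $\varphi(P\cap T_j) \le T_\ell$ for some $\ell \ne \sigma(j)$, then $\varphi(P\cap T_j) \le Z(\cale)$, and applying $\alpha^{-1}$ (which preserves $Z(\cale)$ because the invariance condition $\9\alpha\cale=\cale$ shows $\autf(T)$ fixes every characteristic subgroup of $\cale$) yields $\varphi_0(P\cap T_j) \le Z(\cale) \cap T_j = Z(\cale_j)$. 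Because any morphism in $\cale_j$ sends an element into its $\cale_j$-conjugacy class, and elements of $Z(\cale_j)$ have singleton class, this forces $P\cap T_j \le Z(\cale_j) \le Z$, contradicting $P\in\calh$. A parallel argument applied to $\varphi^{-1}$ shows $Q \in \calh$ whenever $P \in \calh$.

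For part (b), I verify each axiom of Hypotheses \ref{h:chi-setup}: (i) closure of $\calh$ under overgroups is immediate, and under $\calf$-conjugacy follows from (a); (ii) $\chi(\varphi\psi) = \chi(\varphi)\chi(\psi)$ follows from uniqueness of the permutation applied to composites, and $\chi(\incl_P^Q) = 1$ because inclusion preserves each $T_i$; (iii) $c_t \in \Inn(T)$ satisfies $c_t(T_i) = T_i$ since $T_i \nsg T$, so $\chi(\Inn(T)) = 1$, while on $\autf(T)$ my $\chi$ coincides with $\chi_0$ of Notation \ref{n:Ei<F} (since $\alpha$ fixes $Z(\cale)$, the conditions $\alpha(T_i) = T_{\sigma(i)}$ and $\alpha(T_iZ(\cale)) = T_{\sigma(i)}Z(\cale)$ are equivalent), giving $\chi(\autf(T)) = G$ by definition.

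The hard part is axiom (iv). Given $P \le T$ with $P\cap T_i \le Z(\cale)$ for some $i$, Hypothesis \ref{h:Ei<F}(ii) forces $T_i \not\le P$, so $P < PT_i$; a standard $p$-group argument produces $x \in T_i\setminus P$ normalizing $P$ (adjusting by an element of $P$ if the initial candidate is not already in $T_i$). The crucial commutator identity comes next: writing $t = t_1\cdots t_k \in T$ with $t_j \in T_j$ and using that $x\in T_i$ commutes with $T_j$ for $j\ne i$, one finds $[x,t] = [x,t_i] \in T_i$. For $t\in P$, combining this with $[x,t]\in P$ gives $[x,P] \le P\cap T_i \le Z(\cale)\cap P$, and since $x$ centralizes $Z(\cale)\le Z(T)$, also $[x, Z(\cale)\cap P] = 1$. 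This lets me apply Lemma \ref{l:Op(AutP)} to the chain $1 \le Z(\cale)\cap P \le P$, provided $\autf(P)$ normalizes $Z(\cale)\cap P$. For this I show $Z(\cale)$ is strongly closed in $\calf$: for $z\in Z(\cale)$ and $\psi\in\homf(\gen{z},S)$, strong closure of $T$ places $\psi(z)\in T$, and the Frattini decomposition $\psi = \alpha\psi_0$ sends $z$ to $\alpha(\psi_0(z))\in\alpha(Z(\cale))=Z(\cale)$, since $Z(\cale)$ is strongly closed in $\cale$ and $\alpha$ preserves $Z(\cale)$ by invariance. The main obstacle is assembling the commutator identity together with the strong closure of $Z(\cale)$, which together give the control on $c_x^P$ required for axiom (iv).
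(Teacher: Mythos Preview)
Your proof is correct and follows the paper's strategy for (a)/(a$'$) closely: both factor $\varphi|_{P\cap T}=\alpha\varphi_0$ via the Frattini condition for $\cale\nsg\calf$, then invoke Hypothesis~\ref{h:Ei<F}\eqref{h3-4} to obtain $\sigma$. For uniqueness the paper argues more directly from the direct product $T/Z=\prod_i(T_iZ/Z)$ of Lemma~\ref{l:E/Z=prod}(c): pick $x\in(P\cap T_j)\sminus Z$, note $\varphi(x)\notin Z$ by strong closure of $Z$ in $\calf$, and observe $\varphi(x)Z$ lies in a unique direct factor. Your route through $\varphi_0$ and $Z(\cale_j)=Z\cap T_j$ also works but is longer. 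One imprecision: your ``parallel argument applied to $\varphi^{-1}$'' for $Q\in\calh$ does not quite do the job as stated; the clean argument (used in the paper) is that $\varphi(P)\cap T_{\sigma(i)}\ge\varphi(P\cap T_i)\nleq Z$ by strong closure of $Z$, so $\varphi(P)\in\calh$ and hence $Q\ge\varphi(P)$ lies in $\calh$.

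The genuine divergence is in verifying Hypothesis~\ref{h:chi-setup}\eqref{h2-4}. The paper introduces $Y\le T$ with $Y/Z=Z(T/Z)$, shows $Y\in\calh$ (since each $Z(T_iZ/Z)\ne1$), and for $P\le T$ with $P\notin\calh$ picks $x\in N_Y(P)\sminus P$; then $[x,P]\le Z\cap P$ simply because $Y/Z$ is central in $T/Z$. You instead fix one index $i$ with $P\cap T_i\le Z$, produce $x\in N_{T_i}(P)\sminus P$ by the normalizer-growth argument in $PT_i$, and use $[T_i,T_j]=1$ for $j\ne i$ to compute $[x,P]\le P\cap T_i\le Z\cap P$. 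Both arrive at the same filtration $1\le Z\cap P\le P$ and conclude via Lemma~\ref{l:Op(AutP)} and strong closure of $Z$. Your argument exploits the commuting hypothesis more directly; the paper's is slightly slicker in that it avoids the explicit commutator calculation by working uniformly modulo $Z$, but neither approach has a real advantage over the other.
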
 

\begin{proof} Recall that $Z=Z(\cale)$ and $\cale\nsg\calf$, and
$\calh=\{P\le S\,|\, P\cap T_i\nleq Z(\cale)~\forall\,i\in\kk\}$.

\smallskip


\noindent\textbf{(a,a$'$) } Fix $P,Q\le S$ and $\varphi\in\homf(P,Q)$. Then 
$\varphi(P\cap T)\le T$ since $T$ is strongly closed in $\calf$ (recall 
$\cale\nsg\calf$). By the 
Frattini condition for $\cale\nsg\calf$, there are $\alpha\in\autf(T)$ and 
$\varphi_0\in\Hom_\cale(P\cap T,T)$ such that $\varphi|_{P\cap 
T}=\alpha\varphi_0$. By Hypotheses \ref{h:Ei<F}\eqref{h3-4}, there is 
$\sigma=\chi_0(\alpha)\in G\le\Sigma_k$ such that 
$\alpha(T_i)=T_{\sigma(i)}$ for each $i$. Hence 
for each $I\subseteq\kk$, 
	\beqq \varphi(P\cap T_I)=\alpha(\varphi_0(P\cap T_I)) \le 
	\alpha(T_I) = T_{\sigma(I)}. \label{l:Ei-3} \eeqq

Now let $j,\ell\in\kk$ be such that $P\cap T_j\nleq Z$ and $\varphi(P\cap 
T_j)\le T_\ell$. Choose $x\in(P\cap T_j)\sminus Z$. Then 
$\varphi(x)Z\in(T_{\sigma(j)}Z/Z)\cap(T_{\ell}Z/Z)$, and $\varphi(x)\notin 
Z$ since $Z=Z(\cale)$ is strongly closed in $\calf$. So $\ell=\sigma(j)$ by 
Lemma \ref{l:E/Z=prod}(c). 

In particular, if $P\in\calh$, then $P\cap T_i\nleq Z$ for each $i\in\kk$, 
so $\sigma$ is the unique permutation satisfying $\varphi(P\cap T_i)\le 
T_{\sigma(i)}$ for each $i$. Also, $\varphi(P)\cap 
T_{\sigma(i)}\ge\varphi(P\cap T_i)\nleq Z$ for each $i$ since $Z$ is 
strongly closed, so $Q\ge\varphi(P)\in\calh$.

\smallskip

\noindent\textbf{(b) } We must prove the following four properties:
\begin{enumi} 

\item $\calh$ is closed under $\calf$-conjugacy and overgroups; 

\item $\chi(\varphi\psi)=\chi(\varphi)\chi(\psi)$ whenever 
$\varphi,\psi\in\Mor(\calf^\calh)$ are composable, and $\chi(\incl_P^Q)=1$ 
for each $P\le Q$ in $\calh$;

\item $\chi(\Inn(T))=1$ and $\chi(\autf(T))=G$; and 

\item for each $P\le T$ not in $\calh$, there is $x\in N_T(P)\sminus P$ 
such that $c_x^P\in O_p(\autf(P))$. 

\end{enumi}
Point \eqref{h2-1} holds since $\calh$ is closed 
under $\calf$-conjugacy by \eqref{l:Ei-1}, and is closed under overgroups 
by definition. Point \eqref{h2-2} and the first statement in \eqref{h2-3} 
hold by definition of $\chi$, and $\chi(\autf(T))=G$ by definition of $G$ 
(Notation \ref{n:Ei<F}) and since $\chi|_{\autf(T)}=\chi_0$. 

It remains to show \eqref{h2-4}. Let $Y\le T$ be such that 
$Y\ge Z$ and $Y/Z=Z(T/Z)$. Thus $Y\nsg T$. For each $i\in\kk$, 
$(Y\cap T_iZ)/Z=Z(T_iZ/Z)\ne1$ (recall that $T/Z=\prod_{i=1}^k(T_iZ/Z)$ by 
Lemma \ref{l:E/Z=prod}(c)), so $Y\cap T_iZ\nleq Z$, and $Y\cap T_i\nleq Z$ 
since $Y\ge Z$. Hence $Y\in\calh$.

If $P\le T$ and $P\notin\calh$, then $P\ngeq Y$ since $\calh$ is closed 
under overgroups. Hence $YP>P$ and $N_{YP}(P)>P$. Choose $x\in 
N_Y(P)\sminus P$; then $[x,Z\cap P]=1$ and $[x,P]\le Z\cap P$. So 
$c_x^P$ induces the identity on $P\cap Z$ and on $P/(P\cap Z)$, and since 
$Z$ is strongly closed in $\calf$, we have $c_x^P\in O_p(\autf(P))$ by 
Lemma \ref{l:Op(AutP)}. 
\end{proof}

We need some more notation. Note that as a special case of Hypotheses 
\ref{h:Ei<F}\eqref{h3-4}, the 
conjugation action of each $x\in S$ permutes the subgroups $T_i\le T$ for 
$i\in\kk$, thus inducing an action of $S$ on $\kk$. 

\begin{Not} \label{n:EJ<F}
Assume Hypotheses \ref{h:Ei<F} and Notation \ref{n:Ei<F}. For each nonempty 
subset $J=\{j_1,\dots,j_\ell\}\subseteq\kk$, define 
	\begin{align*} 
	T_J&=T_{j_1}\cdots T_{j_\ell} = \gen{T_j\,|\,j\in J}, & 
	N_J&=N_S(J)=N_S(T_J), \\
	\cale_J &= \cale_{j_1}\cdots\cale_{j_\ell}, &
	W_J&=C_S(J) = \bigcap\nolimits_{j\in J}N_S(T_j). 
	\end{align*}
Also, define 
	\begin{align*} 
	\calh\8J &= \{P\le S \,|\, P\cap T_j\nleq Z~ \textup{for 
	each $j\in J$}\} \supseteq \calh, \\
	\caln_J &= \Gen{\varphi\in\homf(P,Q) \,\big|\, 
	P,Q\in\calh\8J\6{N_J},~ \varphi(P\cap T_J)\le T_J },
	\quad\textup{and} \\
	\calw_J &= \Gen{\varphi\in\homf(P,Q) \,\big|\, 
	P,Q\in\calh\8J\6{W_J},~ \varphi(P\cap T_j)\le T_j~ \textup{for each 
	$j\in J$} }. 
	\end{align*}
\end{Not}

These fusion subsystems $\calw_J\le\caln_J$ over $W_J\le N_J$ are the main 
focus of our attention in the rest of the section. We will show in Theorem 
\ref{t:NJ} that they are saturated, and are unchanged if we replace 
$\calh\8J$ by $\calh$ in their definitions. 


\begin{Lem} \label{l:EJ<F}
Assume Hypotheses \ref{h:Ei<F} and Notation \ref{n:Ei<F} and \ref{n:EJ<F}. 
Then for each $\emptyset\ne J\subseteq\kk$, 
\begin{enuma} 

\item \label{l:EJ-1} if $\chi_0(\alpha)(J)=J$ for each $\alpha\in\autf(T)$, 
then $\cale_J\nsg\calf$; and 

\item \label{l:EJ-2} if $\cald\le\calf$ is a fusion subsystem over $D\le S$ 
such that $\cale_J\nsg\cald$,  then $\cald^{cr}\subseteq\calh\8J$.

\end{enuma}
\end{Lem}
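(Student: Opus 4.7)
For part (a), the plan is to verify the four axioms of Definition \ref{d:Fnormal} for $\cale_J\nsg\calf$. That $T_J$ is strongly closed in $\calf$ is immediate from Lemma \ref{l:Ei<F}(a$'$): any morphism in $\calf$ sends $P\cap T_J$ into $T_{\sigma(J)}$ for some $\sigma\in G$ depending on the morphism, and $\sigma(J)=J$ by the standing hypothesis. For the invariance and Frattini conditions, I would first observe that $\cale_J$ is the full subcategory of $\cale$ on subgroups of $T_J$, by applying Lemma \ref{l:E/Z=prod}(a) to the two-factor central product decomposition $\cale=\cale_J\cdot\cale_{\kk\sminus J}$. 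The strong invariance of $\cale\nsg\calf$ from \cite[Proposition I.6.4]{AKO} together with the Frattini condition for $\cale\nsg\calf$ then imply the corresponding conditions for $\cale_J\le\calf$, since any $\cale$-morphism between subgroups of $T_J$ (which is where these conditions land us, once one uses that each $\alpha\in\autf(T)$ preserves $T_J$) automatically lies in the full subcategory $\cale_J$.

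The extension condition will be the main obstacle in (a). Given $\alpha\in\Aut_{\cale_J}(T_J)$, the plan is first to extend $\alpha$ by the identity on $T_{\kk\sminus J}$ to $\til\alpha\in\Aut_\cale(T)$ using the central product decomposition of $\cale$, then apply the extension condition for $\cale\nsg\calf$ to obtain $\5\alpha\in\Aut_\calf(TC_S(T))$ with $[\5\alpha,C_S(T)]\le Z(T)$. Since $T_J$ is strongly closed hence normal in $S$ and therefore receptive in $\calf$, and since $T_JC_S(T_J)\le N_\alpha^\calf$ (each $c_x^{T_J}$ for $x\in T_JC_S(T_J)$ being inner via the $T_J$-part of $x$), $\alpha$ further extends to some $\4\alpha\in\Aut_\calf(T_JC_S(T_J))$. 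The delicate point will be verifying $[\4\alpha,C_S(T_J)]\le Z(T_J)$: because $C_S(T_J)$ is in general strictly larger than $T_{\kk\sminus J}C_S(T)$, I will need to analyze how $c_z$ for $z\in C_S(T_J)$ permutes the $T_i$ with $i\in\kk\sminus J$ (using Hypothesis \ref{h:Ei<F}\eqref{h3-4}, knowing $c_z$ fixes each $T_j$ pointwise for $j\in J$) and combine this with the bound already known for $C_S(T)$.

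For part (b), the plan is to argue contrapositively: given $P\in\cald^c\sminus\calh\8J$, I will produce $x\in N_D(P)\sminus P$ with $c_x^P\in O_p(\Aut_\cald(P))$, which forces $P\notin\cald^{cr}$ because $P$ is $\cald$-centric. Fix $j_0\in J$ with $P\cap T_{j_0}\le Z$, and for each $j\in J$ let $Y_j\le T_j$ be the preimage of $Z(T_j/(Z\cap T_j))$; set $Y_J=\prod_{j\in J}Y_j$. Since $Z$ is preserved by $\autf(T)$ (from $\cale\nsg\calf$), $Z\cap T_J$ is strongly closed in $\calf$, so $Y_J$ is characteristic in $T_J$ and hence normal in $D$ (because $T_J\nsg D$ by $\cale_J\nsg\cald$). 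Hypothesis \ref{h:Ei<F}\eqref{h3-3} forces $Y_{j_0}\nleq Z$, which together with $P\cap T_{j_0}\le Z$ gives $Y_J\ge Y_{j_0}\nleq P$; the normalizer-growth principle applied to the $p$-group $Y_JP\le D$ then yields $x\in N_{Y_J}(P)\sminus(Y_J\cap P)$. Finally, I would apply Lemma \ref{l:Op(AutP)} to the series $1\le P\cap Z\le P\cap T_J\le P$ with $\Gamma=\Aut_\cald(P)$: the required commutator bounds $[x,P\cap Z]=1$, $[x,P\cap T_J]\le Z$, and $[x,P]\le T_J$ hold respectively because $Z\le Z(T)$ and $x\in T$, because $[Y_J,T_J]\le Z\cap T_J$ by construction, and because $T_J$ is strongly closed in $\calf$ with $x\in T_J$. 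Hence $c_x^P\in O_p(\Aut_\cald(P))$, and since $P$ is $\cald$-centric with $x\notin P$, the class $[c_x^P]$ is a nontrivial element of $O_p(\Out_\cald(P))$, so $P\notin\cald^{cr}$.
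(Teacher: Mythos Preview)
Your argument for (b) is essentially the paper's, using $Z$ and the factorwise subgroups $Y_j$ in place of the paper's $Z_J=Z(\cale_J)$ and the single subgroup $Y_J$ with $Y_J/Z_J=Z(T_J/Z_J)$. One point needs repair: in (b) you are not assuming the hypothesis of (a), so $Z\cap T_J$ need not be strongly closed in $\calf$, and in any case your $Y_J=\prod_{j\in J}Y_j$ is not characteristic in $T_J$ as an abstract group. What you actually need, and what holds, is $Y_J\nsg D$: conjugation by elements of $D\le S$ permutes the $T_i$ by Hypothesis~\ref{h:Ei<F}\eqref{h3-4}, sends $J$ to $J$ since $T_J\nsg D$, and preserves $Z$ since $Z$ is strongly closed in $\calf$; hence it permutes the $Y_j$ and normalizes $Y_J$. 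With that fix the rest of your argument for (b) goes through.

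In (a) there is a genuine gap at the extension condition. Your two extensions $\5\alpha$ (from the extension condition for $\cale\nsg\calf$, defined on $TC_S(T)$) and $\4\alpha$ (from receptivity of $T_J$, defined on $T_JC_S(T_J)$) are unrelated, so the bound $[\5\alpha,C_S(T)]\le Z(T)$ says nothing about $\4\alpha$. If instead you extend $\til\alpha$ via receptivity of $T$ (this works, since $C_S(T_J)\le N_{\til\alpha}^\calf$), the computation $\til\alpha\, c_x^T\,\til\alpha^{-1}=c_x^T$ for $x\in C_S(T_J)$ yields only $x^{-1}\4\alpha(x)\in C_S(T)$, not $x^{-1}\4\alpha(x)\in Z(T_J)$; since $C_S(T)$ is in general much larger than $Z(T_J)$, your suggested ``analysis of the permutation action'' does not close the gap. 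The paper avoids this entirely: it first proves $\cale_J\nsg\cale$, where the extension condition is immediate (extend $\alpha$ by the identity on $T_{\kk\sminus J}$ and use $C_T(T_J)=Z(T_J)T_{\kk\sminus J}$ to read off $[\4\alpha,C_T(T_J)]=[\alpha,Z(T_J)]\le Z(T_J)$), and then invokes \cite[7.4]{A-gfit}: if $\cale_J\nsg\cale\nsg\calf$ and every $\alpha\in\autf(T)$ normalizes $\cale_J$, then $\cale_J\nsg\calf$.
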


\begin{proof} \noindent\textbf{(a) } We first show that $\cale_J\nsg\cale$ 
(for all $\emptyset\ne J\subseteq\kk$). Lemma \ref{l:E/Z=prod}(b) implies 
that $T_J$ is strongly closed in $\cale$. The invariance and Frattini 
conditions for $\cale_J\le\cale$ hold since $\cale_J$ is the full 
subcategory of $\cale$ with objects the subgroups of $T_J$ (Lemma 
\ref{l:E/Z=prod}(a)). By Lemma \ref{l:comm.subsys.}, for each 
$\alpha\in\Aut_{\cale_J}(T_J)$, there is $\4\alpha\in\Aut_\cale(T)$ such 
that $\4\alpha|_{T_J}=\alpha$ and $\4\alpha|_{T_i}=\Id_{T_i}$ for each 
$i\in\kk\sminus J$, and hence $[\4\alpha,C_T(T_J)]=[\alpha,Z(T_J)]\le 
Z(T_J)$. The extension condition thus holds, and so $\cale_J\nsg\cale$. 

Thus $\cale_J\nsg\cale\nsg\calf$ (recall $\cale\nsg\calf$ by Hypotheses 
\ref{h:Ei<F}\eqref{h3-2}). If $\autf(T)\le\Sigma_k$ sends $J$ to itself, 
then each $\alpha\in\autf(T)$ permutes the $T_j$ for $j\in J$ by Hypotheses 
\ref{h:Ei<F}\eqref{h3-4}, hence normalizes $T_J$, and normalizes $\cale_J$ 
since it is the full subcategory of $\cale$ with objects the subgroups of 
$T_J$ (Lemma \ref{l:E/Z=prod}(a) again). So $\cale_J\nsg\calf$ by 
\cite[7.4]{A-gfit}. 

\smallskip

\noindent\textbf{(b) } Let $J\subseteq\kk$ be arbitrary. Set 
$Z_J=Z(\cale_J)$, and let $Y_J\le T_J$ be such that $Y_J\ge Z_J$ and 
$Y_J/Z_J=Z(T_J/Z_J)$. For each $j\in J$, $(Y_J\cap 
T_jZ_J)/Z_J=Z(T_jZ_J/Z_J)\ne1$ since $T_jZ_J/Z_J\cong T_jZ/Z\ne1$ by 
Hypotheses \ref{h:Ei<F}\eqref{h3-3} (and since the $T_j$ commute pairwise). 
Let $t_j\in T_j$ and $z_j\in Z_J$ be such that $t_jz_j\in Y_J\sminus 
Z_J$. Then $t_j\in Y_J\sminus Z_J$ since $z_j\in Z_J\le Y_J$, and 
$t_j\in(Y_J\cap T_j)\sminus Z$ since $T_j\cap Z\le Z_J$. Thus $Y_J\cap 
T_j\nleq Z$ for each $j\in J$, and so $Y_J\in\calh\8J$.

Let $\cald\le\calf$ be a fusion subsystem over $D\le S$ such that 
$\cale_J\nsg\cald$, and in particular, $T_J\nsg D$. Let $P\le D$ be such 
that $P\notin\calh\8J$. Then $P\ngeq Y_J$ since $\calh\8J$ is closed under 
overgroups. Also, $P$ normalizes $Y_J$ since it normalizes $T_J$ and $Z_J$, 
so $PY_J>P$ and $N_{PY_J}(P)>P$. 

Choose $x\in N_{Y_J}(P)\sminus P$. Then $[x,T_J]\le[Y_J,T_J]\le Z_J$ and 
$[x,Z_J]\le[Y_J,Z_J]=1$ (recall $Z_J=Z(\cale_J)\le Z(T_J)$), so $c_x^P$ 
acts trivially on $P\cap Z_J$ and on $(P\cap T_J)/(P\cap Z_J)$. Also, 
$[x,P]\le[N_{Y_J}(P),P]\le P\cap T_J$ since $Y_J\le T_J\nsg D$ and $P\le 
D$, so $c_x^P$ also acts trivially on $P/(P\cap T_J)$. Since 
$\cale_J\nsg\cald$, the subgroups $T_J$ and $Z_J$ are both strongly closed 
in $\cald$, and hence $c_x^P\in O_p(\Aut_\cald(P))$ by Lemma 
\ref{l:Op(AutP)}. 

Recall that $x\in Y_J\le T_J\le D$. 
If $c_x^P\notin\Inn(P)$, then $O_p(\Out_\cald(P))\ne1$ and $P$ is not 
$\cald$-radical. If $c_x^P\in\Inn(P)$, then $x\in PC_D(P)\sminus P$, so 
$C_D(P)\nleq P$, and $P$ is not $\cald$-centric. In either case, 
$P\notin\cald^{cr}$, and hence $\cald^{cr}\subseteq\calh\8J$. 
\end{proof}

We are now ready to prove the ``normalizing'' properties of the fusion 
subsystems $\calw_J\le\caln_J\le\calf$ defined in Notation \ref{n:EJ<F}. 


\begin{Thm} \label{t:NJ}
Assume Hypotheses \ref{h:Ei<F} and Notations \ref{n:Ei<F} and \ref{n:EJ<F}, 
and let $J\subseteq\kk$ be a nonempty subset such that $T_J$ is fully 
normalized in $\calf$. Then $\calw_J$ and $\caln_J$ are saturated fusion 
subsystems of $\calf$, $\calw_J$ is normal in $\caln_J$, and the following 
hold.
\begin{enuma}

\item For all $P,Q\in\calh\8J$ contained in $N_J$, 
	\beqq \Hom_{\caln_J}(P,Q) = \bigl\{  
	\varphi\in\homf(P,Q) \,\big|\, \varphi(P\cap T_J)\le T_J \bigr\}. 
	\label{t:NJ-1a} \eeqq
Also, $\cale_J\nsg\caln_J$. If $\cald\le\calf$ is a saturated 
fusion subsystem such that $\cale_J\nsg\cald$, then $\cald\le\caln_J$. 

\item For all $P,Q\in\calh\8J$ contained in $W_J$, 
	\beqq \Hom_{\calw_J}(P,Q) = \bigl\{  
	\varphi\in\homf(P,Q) \,\big|\, \varphi(P\cap T_j)\le T_j~ 
	\textup{for each $j\in J$} \bigr\}. \label{t:NJ-1b} \eeqq 
Also, $\cale_j\nsg\calw_J$ for each $j\in J$. If $\cald\le\calf$ is 
a saturated fusion subsystem such that $\cale_j\nsg\cald$ for 
each $j\in J$, then $\cald\le\calw_J$. 

\end{enuma}
\end{Thm}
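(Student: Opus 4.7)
The plan is to deduce Theorem \ref{t:NJ} from Theorem \ref{t:H<G} applied to the $\chi$-data of Lemma \ref{l:Ei<F}\eqref{l:Ei-2}. Inside $G\le\Sigma_k$, I take the setwise stabilizer $H_N:=\{\sigma\in G\,|\,\sigma(J)=J\}$ and the pointwise stabilizer $H_W:=\{\sigma\in G\,|\,\sigma|_J=\Id_J\}$; note $H_W\nsg H_N\le G$. Setting $V_N:=H_N\cap U$ and $V_W:=H_W\cap U$, one unwinds Notation \ref{n:chi-setup} to see that $S_{V_N}=N_J$ and $S_{V_W}=W_J$, while by Lemma \ref{l:Ei<F}\eqref{l:Ei-1} the generators of $\calf_{H_N}$ and $\calf_{H_W}$ coincide with those defining $\caln_J$ and $\calw_J$ in Notation \ref{n:EJ<F}, except that the source and target in Theorem \ref{t:H<G} are required to lie in $\calh$ rather than in the larger class $\calh\8J$.

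The key verification is the Sylow hypothesis $V_N\in\sylp{H_N}$ (and $V_W\in\sylp{H_W}$) needed by Theorem \ref{t:H<G}(a). Let $P$ be a Sylow $p$-subgroup of $H_N$ containing $V_N$; since $U\in\sylp{G}$ by Lemma \ref{l:chi-setup}\eqref{l:chi-2}, some $G$-conjugate $\9gP$ lies in $U$, hence in $U\cap\textup{Stab}_G(gJ)$. Because $|S_V|=|V|\cdot|S_{\Id}|$ for every $V\le U$, and $\5\chi^{-1}(U\cap\textup{Stab}_G(gJ))=N_S(T_{gJ})$ with $T_{gJ}\in T_J^\calf$, the hypothesis that $T_J$ is fully normalized yields $|U\cap\textup{Stab}_G(gJ)|\le|V_N|$, so $P=V_N$. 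The Sylow condition for $V_W$ then follows from $V_W=V_N\cap H_W$ together with $|H_WV_N:V_N|$ dividing the $p'$-index $|H_N:V_N|$. Theorem \ref{t:H<G}(a) now supplies the saturation of $\calf_{H_N}$ and $\calf_{H_W}$.

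It remains to identify $\calf_{H_N}=\caln_J$ and $\calf_{H_W}=\calw_J$. The inclusions $\calf_{H_N}\le\caln_J$ and $\calf_{H_W}\le\calw_J$ are immediate from the generating sets. For the reverse, any generator $\varphi\colon P\to Q$ of $\caln_J$ with source in $\calh\8J\sminus\calh$ will be extended over the characteristic overgroup $PY$, where $Y\le T$ satisfies $Y\ge Z$ and $Y/Z=Z(T/Z)$ as in the proof of Lemma \ref{l:Ei<F}\eqref{l:Ei-2}; since $T$ is receptive and $Y$ is characteristic in $T$, such an extension $\4\varphi\in\homf(PY,N_J)$ exists with $PY\in\calh$, and the Frattini condition for $\cale\nsg\calf$ forces $\4\varphi$ to again send the $T_J$-part into $T_J$, so $\4\varphi$ is a generator of $\calf_{H_N}$ restricting to $\varphi$. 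The morphism descriptions (a) and (b) then follow by inspection. For $\calw_J\nsg\caln_J$, I re-apply Theorem \ref{t:H<G}(b) inside the saturated $\caln_J$ to the normal subgroup $H_W\nsg H_N$, after setting up the analogous $\chi$-data over $N_J$ with $H_N$ playing the role of $G$. For $\cale_J\nsg\caln_J$ and $\cale_j\nsg\calw_J$, I combine $\cale_J\nsg\cale\le\caln_J$ (the inclusion because $T_J$ is strongly closed in $\cale$ by Lemma \ref{l:E/Z=prod}(b)) with Lemma \ref{l:Fnormal}, after verifying the extension condition for $\cale_J\le\caln_J$. For maximality, if $\cald\le\calf$ is saturated with $\cale_J\nsg\cald$, then Lemma \ref{l:EJ<F}\eqref{l:EJ-2} gives $\cald^{cr}\subseteq\calh\8J$, and since $T_J$ is strongly closed in $\cald$ each such morphism automatically satisfies the condition in \eqref{t:NJ-1a}, so Theorem \ref{t:AFT} gives $\cald\le\caln_J$; the analogous argument gives $\cald\le\calw_J$ when each $\cale_j\nsg\cald$. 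The hardest point will be the extension-over-$PY$ step: ensuring that the $\calf$-extension $\4\varphi$ actually lies in the generating set of $\calf_{H_N}$ (that is, that it still sends the $T_J$-part into $T_J$), which I expect to control via the Frattini factorization in $\cale\nsg\calf$ together with the $\autf(T)$-invariance of $Y$.
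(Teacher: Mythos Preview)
Your overall strategy is the paper's own: apply Theorem \ref{t:H<G} to the $\chi$-data of Lemma \ref{l:Ei<F}\eqref{l:Ei-2} with $H_N=N_G(J)$ and $H_W=C_G(J)$, verify the Sylow conditions via the hypothesis that $T_J$ is fully normalized, and then handle normality and maximality much as you describe. Your Sylow argument, your plan for $\calw_J\nsg\caln_J$, and your maximality argument via Lemma \ref{l:EJ<F}\eqref{l:EJ-2} plus Alperin are all fine and match the paper.

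The genuine gap is the identification $\calf_{H_N}=\caln_J$. You propose, for a generator $\varphi\colon P\to Q$ of $\caln_J$ with $P\in\calh\8J\sminus\calh$, to extend $\varphi$ to $\4\varphi\in\homf(PY,N_J)$, citing ``$T$ is receptive and $Y$ is characteristic in $T$''. But receptivity of $T$ only lets you extend morphisms \emph{into} $T$ over the extension subgroup $N_\varphi^\calf$; it says nothing about extending an arbitrary $\varphi\colon P\to Q$ (with $P\nleq T$ in general) to the overgroup $PY$. There is no reason for $Q$, or any $\calf$-conjugate of it, to be receptive enough to absorb $Y$, and $Z=Z(\cale)$ is not central in $\calf$, so even the easier extension to $PZ$ is not automatic. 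The Frattini factorization of $\varphi|_{P\cap T}$ does not help either, since it does not factor $\varphi$ on all of $P$.

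The paper avoids this problem entirely by not trying to push individual generators into $\calh$. Instead it introduces $\caln_J^0:=\calf_{H_N}\le\caln_J$, and proves directly that $\caln_J$ itself is $\calh\8J\6{N_J}$-saturated using Lemma \ref{l:f.n.=>recept}: condition (i) there is exactly the statement that a $\calf$-extension of a $\caln_J$-morphism stays in $\caln_J$ (checked via Lemma \ref{l:Ei<F}(a$'$)), and condition (ii) is supplied by the fully-normalized hypothesis on $T_J$ (one transports to $T_{\sigma(J)}$ and back). Since $(\caln_J)^{cr}\subseteq\calh$ by Lemma \ref{l:chi-setup}\eqref{l:chi-4}, Proposition \ref{p:H-sat-gen}(a) gives saturation of $\caln_J$, and then Alperin's theorem forces $\caln_J=\caln_J^0$ because the two agree on $\calh\6{N_J}\supseteq(\caln_J)^{cr}$. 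You should replace the extension-over-$PY$ step with this argument.
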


\newcommand{\0}{^0}   

\begin{proof} We first check \eqref{t:NJ-1a} and \eqref{t:NJ-1b}. 
Each of the morphisms appearing in the definition of $\caln_J$ (see Notation 
\ref{n:EJ<F}) sends elements of $T_J$ to elements of $T_J$. So $T_J$ is 
strongly closed in $\caln_J$. Hence for all $P,Q\in\calh\8J\6{N_J}$, the 
set $\Hom_{\caln_J}(P,Q)$ is contained in the right hand side of 
\eqref{t:NJ-1a}, while the opposite inclusion holds by definition of 
$\caln_J$. This proves \eqref{t:NJ-1a}, and \eqref{t:NJ-1b} follows by a 
similar argument.

Define 
	\begin{align*} 
	\caln\0_J &= \Gen{\varphi\in\homf(P,Q) \,\big|\, 
	P,Q\in\calh\6{N_J},~ \varphi(P\cap T_J)\le T_J } \le \caln_J, \\
	\calw\0_J &= \Gen{\varphi\in\homf(P,Q) \,\big|\, 
	P,Q\in\calh\6{W_J},~ \varphi(P\cap T_j)\le T_j~ \textup{for each 
	$j\in J$} } \le \calw_J. 
	\end{align*}
Thus the only difference between $\caln\0_J$ and $\caln_J$ or between 
$\calw\0_J$ and $\calw_J$ lies in the set of objects used in the definition. 

We prove in Step 1 that $\calw\0_J$ and $\caln\0_J$ are saturated and 
$\calw\0_J\nsg\caln\0_J$. In Step 2, we prove (a) and (b) when $J=\kk$ (in 
which case $\calh=\calh\8J$ and hence $\caln\0_J=\caln_J$ and 
$\calw\0_J=\calw_J$). We then prove $\caln\0_J=\caln_J$ and (a) for 
arbitrary $J\subseteq\kk$ in Steps 3 and 4, respectively, and prove 
$\calw\0_J=\calw_J$ and (b) in the general case in Step 5. Throughout the 
proof, we assume that $T_J$ is fully normalized in $\calf$.

\smallskip

\noindent\textbf{Step 1: } Let $\chi_0\:\autf(T)\too G$ and 
$U=\chi_0(\Aut_S(T))\le G$ be as in Notation \ref{n:Ei<F}, and let 
$\5\chi\:S\too U$ be the surjective homomorphism that sends $x\in S$ to 
$\chi_0(c_x^T)\in\chi_0(\Aut_S(T))=U$. By Lemma \ref{l:Ei<F}\eqref{l:Ei-2}, 
we are in the situation of Hypotheses \ref{h:chi-setup}. 

Consider the action of $G\le\Sigma_k$ on $\kk$, and set 
	\[ H=N_G(J),\quad H_0=C_G(J),\quad V=H\cap U=N_U(J),\quad 
	V_0=H_0\cap U=C_U(J). \]
Note that $\caln\0_J=\calf_H$ and $\calw\0_J=\calf_{H_0}$ under Notation 
\ref{n:chi-setup}.

Recall that $N_J=N_S(T_J)=N_S(J)$ under the action of $S$ on $\kk$ induced 
by its conjugation action on $T$. For each $\sigma\in G$, there is 
$\alpha\in\autf(T)$ such that $\chi_0(\alpha)=\sigma$, and 
$T_{\sigma(J)}=\alpha(T_J)\in(T_J)^\calf$. Since $T_J$ is fully normalized 
in $\calf$, we have $|N_S(T_{\sigma(J)})|\le|N_S(T_J)|$ 
and hence $|N_S(\sigma(J))|\le|N_S(J)|$ for each $\sigma\in G$. The map 
$N_S(\sigma(J))\too N_U(\sigma(J))$ induced by $\5\chi$ is surjective and 
its kernel $\Ker(\5\chi)=C_S(\kk)$ is independent of $\sigma$, so 
$|N_U(\sigma(J))|\le|N_U(J)|$ for all $\sigma\in G$. Since $N_U(\sigma(J)) 
= U\cap N_G(\sigma(J))$ where $N_G(\sigma(J))=\9\sigma N_G(J)=\9\sigma H$ 
for each $\sigma$, this shows that $|U\cap H|\ge|U\cap\9\sigma 
H|=|U^\sigma\cap H|$ for each $\sigma\in G$, and hence that  $V=U\cap 
H\in\sylp{H}$. Since $H_0\nsg H$, we also have $V_0=V\cap H_0\in 
\sylp{H_0}$. 

The hypotheses of Theorem \ref{t:H<G} thus hold, and so $\caln\0_J=\calf_H$ 
and $\calw\0_J=\calf_{H_0}$ are both saturated. By the same theorem applied 
with $\caln\0_J$ in the role of $\calf$ (and since $H_0\nsg H$), we have 
$\calw\0_J\nsg\caln\0_J$.

\smallskip

\noindent\textbf{Step 2: } Assume $J=\kk$. In particular, 
$\calh\8{J}=\calh$, $\caln\0_J=\caln_J$, and $\calw\0_J=\calw_J$. Point (a) 
holds since $\cale_{\kk}=\cale\nsg\calf=\caln_{\kk}$ by Hypotheses 
\ref{h:Ei<F}\eqref{h3-2}. 

It remains to prove (b). We already checked \eqref{t:NJ-1b}, and 
$\cale_i\nsg\calw_{\kk}$ for each $i\in\kk$ by Lemma 
\ref{l:EJ<F}\eqref{l:EJ-1}, applied with $\calw_{\kk}$ in the role of 
$\calf$ (hence with $G=1$) and $\{i\}$ in the role of $J$. 

Let $\cald\le\calf$ be a saturated fusion subsystem over $D\le S$ such 
that $\cale_i\nsg\cald$ for each $i\in\kk$. Then $T_i\nsg D$ for each 
$i\in\kk$, so $D\le W_{\kk}$ and $D\ge T_1\cdots T_k=T$. Also, for each 
$P,Q\in\calh\6{D}$, 
	\[ \Hom_\cald(P,Q) \le \{\varphi\in\homf(P,Q) \,|\, \varphi(P\cap 
	T_i)\le T_i ~\forall\, i\in\kk \} = \Hom_{\calw_{\kk}}(P,Q) \]
(recall $T_i$ is strongly closed in $\cald$). 
Since $\cald^{cr}\subseteq\calh$ by Lemma \ref{l:EJ<F}\eqref{l:EJ-2}, 
we now have $\cald\le\calw_{\kk}$ by Theorem \ref{t:AFT} (Alperin's fusion 
theorem).

\smallskip

\noindent\textbf{Step 3: } Throughout the rest of the proof, we let 
$\emptyset\ne J\subseteq\kk$ be arbitrary, subject to the condition that 
$T_J$ be fully normalized in $\calf$. We prove in this step that 
$\caln\0_J=\caln_J$.

We first check that $\caln_J$ is $\calh\8J\6{N_J}$-saturated. By 
Lemma \ref{l:f.n.=>recept}, it suffices to show, for all 
$P\in\calh\8J\6{N_J}$, that the following three points hold.
\begin{enumi} 

\item \boldd{If $P\le\4P\le N_J$, and $\varphi\in\homf(\4P,N_J)$ is such 
that $\varphi|_P\in\Hom_{\caln_J}(P,N_J)$, then $\varphi\in\Mor(\caln_J)$.} 
By Lemma \ref{l:Ei<F}(\ref{l:Ei-1}$'$), there is $\sigma\in\Sigma_k$ such 
that $\varphi(\4P\cap T_i)\le T_{\sigma(i)}$ for each $i\in\kk$ and 
$\varphi(\4P\cap T_J)\le T_{\sigma(J)}$. Since $P\in\calh\8J$, there is 
$x_j\in(P\cap T_j)\sminus Z$ for each $j\in J$, and since 
$\varphi|_P\in\Mor(\caln_J)$, we have $\varphi(x_j)\in(T_{\sigma(j)}\cap 
T_J)\sminus Z$. So $\sigma(j)\in J$ by Lemma \ref{l:E/Z=prod}(c), and 
$\sigma(J)=J$. Hence $\varphi(\4P\cap T_J)\le T_J$, and so 
$\varphi\in\Mor(\caln_J)$.

\item \boldd{For each $\varphi\in\homf(N_{N_J}(P),S)$, there are $R\le S$ and 
$\psi\in\homf(R,N_J)$ such that 
$R\ge\gen{\varphi(N_{N_J}(P)),C_S(\varphi(P))}$ and 
$\psi\varphi\in\Hom_{\caln_J}(N_{N_J}(P),N_J)$.} 
For such $\varphi$, by 
Lemma \ref{l:Ei<F}(\ref{l:Ei-1}$'$), there is $\sigma\in G\le\Sigma_k$ such 
that $\varphi(N_{N_J}(P)\cap T_i)\le T_{\sigma(i)}$ for each $i\in\kk$ and 
$\varphi(N_{N_J}(P)\cap T_J)\le T_{\sigma(J)}\in(T_J)^\calf$. 
Set $J'=\sigma(J)$. 
Since $T_J$ is fully normalized in $\calf$, there 
is $\psi\in\homf(N_S(T_{J'}),S)$ such that $\psi(T_{J'})=T_J$ (see 
\cite[Lemma I.2.6(c)]{AKO}) and hence $\psi(N_S(T_{J'}))\le N_S(T_J)=N_J$. 
Then $\psi\varphi(N_{N_J}(P)\cap T_J)\le T_J$, and so 
$\psi\varphi\in\Hom_{\caln_J}(N_{N_J}(P),N_J)$. Also, $C_S(\varphi(P))\le 
C_S(J')\le N_S(J')=N_S(T_{J'})$ since $\varphi(P)\in\calh\8{J'}$. 

It remains to show that $\varphi(N_{N_J}(P))\le N_S(T_{J'})$. Fix $x\in 
N_{N_J}(P)$. Then $c_x^T$ permutes the subgroups $P\cap T_j$ for $j\in J$, 
and $P\cap T_j\nleq Z$ since $P\in\calh\8J$. So $c_{\varphi(x)}^T$ 
permutes the subgroups $\varphi(P\cap T_j)\le T_{\sigma(j)}$. Hence 
$\5\chi(\varphi(x))\in N_U(J')$, and $\varphi(x)\in N_S(J')=N_S(T_{J'})$. 
Thus $\varphi(N_{N_J}(P))\le N_S(T_{J'})$.

\item \boldd{The Sylow group $N_J$ is fully automized in $\caln_J$.} This holds 
since $\Aut_{\caln_J}(N_J)=\Aut_{\caln\0_J}(N_J)$ by definition, and 
$\caln\0_J$ (also over $N_J$) has already been shown to be saturated. 

\end{enumi} 

Thus $\caln_J$ is $\calh\8J\6{N_J}$-saturated. Since 
$(\caln_J)^{cr}\subseteq\calh$ by Lemma \ref{l:chi-setup}\eqref{l:chi-4}, 
and $\caln_J$ is defined so as to be $\calh\8J\6{N_J}$-generated, $\caln_J$ 
is saturated by Proposition \ref{p:H-sat-gen}(a). Also, $\caln_J$ and 
$\caln\0_J$ are equal after restriction to subgroups in 
$\calh\6{N_J}\supseteq(\caln_J)^{cr}$, so $\caln\0_J=\caln_J$ by Theorem 
\ref{t:AFT} (Alperin's fusion theorem). 

\smallskip

\noindent\textbf{Step 4: } We now prove point (a) in the general case. 
We first check that $\cale\nsg\caln_J$. 
By the extension condition for $\cale\nsg\calf$, each 
$\alpha\in\Aut_\cale(T)$ extends to some $\4\alpha\in\autf(TC_S(T))$ such 
that $[\alpha,C_S(T)]\le Z(T)$. Also, $C_S(T)\le C_S(\kk)\le N_S(J)=N_J$, 
so $TC_S(T)=TC_{N_J}(T)$. Since $\cale$ is a central product of the 
$\cale_i$, we have $\alpha(T_i)=T_i$ for each $i\in\kk$, so in particular, 
$\4\alpha(T_J)=\alpha(T_J)=T_J$, and 
$\4\alpha\in\Aut_{\caln_J}(TC_{N_J}(T))$. The extension condition thus 
holds for $\cale\le\caln_J$, and so $\cale\nsg\caln_J$ by Lemma 
\ref{l:Fnormal}. 

Thus Hypotheses \ref{h:Ei<F} hold with $\cale\nsg\caln_J$ in the 
place of $\cale\nsg\calf$. So $\cale_J\nsg\caln_J$ by Lemma 
\ref{l:EJ<F}\eqref{l:EJ-1}.

Now assume $\cald\le\calf$ is a saturated fusion subsystem over $D\le S$ 
such that $\cale_J\nsg\cald$. In particular, $T_J\le D$ and is strongly 
closed in $\cald$, and hence $D\le N_S(T_J)=N_J$. 
By \eqref{t:NJ-1a} and since $T_J$ is strongly closed in $\cald$, 
for each $P\in\calh\8J\6D$, we have 
	\[ \Aut_\cald(P) \le \{\alpha\in\autf(P) \,|\, \alpha(P\cap T_J)\le 
	T_J \} = \Aut_{\caln_J}(P) . \]
Also, $\cald$ is $\calh\8J\6D$-generated by Theorem \ref{t:AFT} and since 
$\cald^{cr}\subseteq\calh\8J$ (Lemma \ref{l:EJ<F}\eqref{l:EJ-2}), and hence 
$\cald\le\caln_J$. 

\smallskip

\noindent\textbf{Step 5: } By construction, 
$\calw\0_J\le\calw_J\le\caln_J=\caln\0_J$, where $\caln\0_J$ and $\calw\0_J$ 
are saturated by Step 1 and the equality holds by Step 3. By Step 2, applied 
with $\cale_J\nsg\caln_J$ in the role of $\cale\nsg\calf$ and 
$\calh\8J\6{W_J}$ in the role of $\calh$, the subsystem $\calw_J$ is 
saturated, $\cale_j\nsg\calw_J$ for each $j\in J$, and $\cald\le\calw_J$ 
for each $\cald\le\calf$ such that $\cale_j\nsg\cald$ for all $j\in J$. 

To prove (b), it remains only to show that $\calw_J=\calw\0_J$. Both are 
saturated, and they are equal after restriction to subgroups in 
$\calh\6{W_J}$. Since $(\calw_J)^{cr}\subseteq\calh$ by Lemma 
\ref{l:chi-setup}\eqref{l:chi-4}, we have $\calw_J=\calw\0_J$ by Theorem 
\ref{t:AFT} (Alperin's fusion theorem). 
\end{proof}

More generally, for each set $\scrj$ of pairwise disjoint subsets of $\kk$ 
with union $\5J\subseteq\kk$, if $T_{\5J}$ is fully normalized in $\calf$ 
and certain ``Sylow conditions'' hold (those needed to apply Theorem 
\ref{t:H<G}), then one can construct a largest saturated fusion subsystem 
that normalizes $\cale_J$ for each $J\in\scrj$. However, the precise 
statement of such a result seems much more complicated than in the special 
cases considered in Theorem \ref{t:NJ}, so we won't show that here.

\end{document}